\documentclass[12pt,reqno]{amsart}

\usepackage{amsfonts}
\usepackage{amsmath}
\usepackage{amssymb, wasysym}
\usepackage{amsthm}
\usepackage{xcolor}
\usepackage{graphicx}

\usepackage{tikz}
\usepackage{pgfplots}
\pgfplotsset{compat=1.18}

\usepackage{hyperref}
\hypersetup{
	colorlinks=true,
	linkcolor=blue,
	citecolor=blue,
	urlcolor=cyan
}

\allowdisplaybreaks
\usepackage{tikz}
\newtheorem{main}{}[section]

\newtheorem{theorem}[main]{Theorem}

\numberwithin{equation}{section} % Makes it 1.1, 1.2, etc.

\newtheorem{corollary}[main]{Corollary}

\makeatletter
\let\c@equation\c@main % Link equation counter to main counter
\makeatother
\allowdisplaybreaks
\begin{document}
	\title{Radii of Starlikeness of Spirallike Functions}
	\author[A. Lakshmipriya and V. Ravichandran]{ Lakshmipriya Arul \and Vaithiyanathan Ravichandran}
	
	\address{Department of Mathematics,
		National Institute of Technology,
		Tiruchirappalli-620015, India}
	
	\email{arullakshmipriya24@gmail.com; ravic@nitt.edu, 		vravi68@gmail.com}

	\begin{abstract}A normalized analytic function $f$ defined on the unit disk $\mathbb{D}$ is called $\alpha$-spirallike if $f(\mathbb{D})$ is an $\alpha$-spirallike domain, that is, for every $z\in f(\mathbb{D})$, the $\alpha$-spiral $ze^{-e^{i\alpha}t}$, $t\ge0$, is contained in $f(\mathbb{D})$, or equivalently, if $\operatorname{Re}\!\left(e^{i\alpha} {zf'(z)}/{f(z)}\right)>0$ for all  $z\in \mathbb{D}$. The 0-spirallike functions are the usual starlike functions. The function $f$ is parabolic starlike, if  $\operatorname{Re}\!\left(zf'(z)/f(z)\right)
		>
		\left|(zf'(z)/f(z))-1\right|$, lemniscate starlike  if  $|(zf'(z)/f(z))^2-1|<1$ and lune starlike if $|(zf'(z)/f(z))^2-1|<2|zf'(z)/f(z)|$. In this paper, we determine  the radii of parabolic starlikeness, lemniscate starlikeness and lune starlikeness of $\alpha$-spirallike  functions. Our approach combines geometric characterizations of the underlying regions with the resultant method for eliminating auxiliary variables, leading to explicit equations for the sharp radii.  Some known radius results are recovered as special cases.
	\end{abstract}
	\subjclass{30C45; 30C80}
	\keywords{Starlike functions; Spirallike functions; Parabolic starlike functions; Lemniscate starlike functions;   Radius problems}
	\maketitle
	\section{Introduction}Let \(\mathbb{D}\) be the unit disk consisting of points \(z\in \mathbb{C}\) with \(|z|<1\). The class \(\mathcal{A}\) consists of analytic functions \(f:\mathbb{D}\to \mathbb{C}\) normalized by \(f(0)=0\) and \(f'(0)=1\). The subclass \(\mathcal{S}\) comprises all univalent functions in \(\mathcal{A}\). Let \(\mathcal{P}\) denote the class of analytic functions \(p:\mathbb{D}\to\mathbb{C}\) satisfying \(p(0)=1\) and \(\operatorname{Re}(p(z))>0\) for all \(z\in\mathbb{D}\). Functions in \(\mathcal{P}\) are called Carath\'{e}odory functions or simply functions with positive real part.
	A domain \(\Omega\) is called \(\alpha\)-spirallike if, for every \(z\in\Omega\), the \(\alpha\)-spiral
	\(ze^{-e^{i\alpha}t}\), \(t\ge0\), lies entirely in \(\Omega\). A function \(f\in \mathcal{A}\) is called \(\alpha\)-spirallike if \(f(\mathbb{D})\) is an \(\alpha\)-spirallike domain, and the class of all such functions is denoted by \(\mathcal{SP}_{\alpha}\). Equivalently, \(f\in \mathcal{SP}_{\alpha}\) if and only if \[\operatorname{Re}\!\left(e^{i\alpha}\frac{zf'(z)}{f(z)}\right)>0,\quad  z\in\mathbb{D}.\] Moreover, \v{S}pa\v{c}ek \cite{Zbl0006.06403} proved that every function in the class \(\mathcal{SP}_{\alpha}\) is univalent in \(\mathbb{D}\). For \(\alpha=0\), the class \(\mathcal{SP}_0\) is the well-known class \(\mathcal{ST}\) of starlike functions.
	
	We shall be interested in three specific subclasses of starlike functions. The first is the class of parabolic starlike functions introduced by R{\o}nning \cite{MR1128729} in his investigation of uniformly convex functions following the definition of uniformly starlike and uniformly convex function by Goodman \cite{MR1097287, MR1145573}.  A function \(f\in\mathcal{A}\) is called uniformly convex if it maps every circular arc in \(\mathbb{D}\), whose center also lies in \(\mathbb{D}\), onto a convex arc. Analytically, a function \(f\in\mathcal{A}\) is uniformly convex if \(1+zf''(z)/f'(z)\) takes values in the parabolic region \(\Omega_{par}:=\{w\in\mathbb{C}:\operatorname{Re}w>|w-1|\}\). These two classes are connected through the Alexander relation \cite{MR1503516}: \(f\) is uniformly convex if and only if \(zf'\) is parabolic starlike. Thus, the  function \(f\in\mathcal{A}\) is   parabolic starlike if \(zf'(z)/f(z)\) takes values in the parabolic region \(\Omega_{par}\).  In 1996, Sok{\'o}{\l} and Stankiewicz \cite{MR1473947} introduced a subclass of \(\mathcal{A}\) consisting of functions starlike with respect to the lemniscate of Bernoulli, \( \Omega_{L}:=\{w\in\mathbb{C}:|w^2-1|<1\}. \) Therefore, a function \(f\in\mathcal{A}\) is lemniscate starlike if \(zf'(z)/f(z)\) takes values in \(\Omega_{L}\).  The third is the class of lune starlike functions introduced by Raina and Sok\'o{\l} \cite{MR3419845}, which is associated with the lune region \(\Omega_{\!\!\rightmoon}:=\{w\in\mathbb{C}:|w^2-1|<2|w|\}.\) Let \(W(z)=zf'(z)/f(z),\) then we define the following:

	The \emph{radius of parabolic starlikeness} of the class \(\mathcal{SP}_{\alpha}\) is defined by
	\[
	R_{\mathcal S_{par}}(\mathcal{SP}_{\alpha})
	=
	\sup\left\{
	r\in(0,1]:
	\operatorname{Re}\left(W(z)\right)>
	\left|W(z)-1\right|
	\text{ for all }|z|<r,\ f\in\mathcal{SP}_{\alpha}
	\right\}.
	\]
	Similarly, the \emph{radius of lemniscate starlikeness} of the class \(\mathcal{SP}_{\alpha}\) is defined by
	\[
	R_{\mathcal S_{L}}(\mathcal{SP}_{\alpha})
	=
	\sup\left\{
	r\in(0,1]:
	\left|\left(W(z)\right)^2-1\right|<1
	\text{ for all }|z|<r,\ f\in\mathcal{SP}_{\alpha}
	\right\}.
	\]
	Finally, the \emph{radius of lune starlikeness} of the class \(\mathcal{SP}_{\alpha}\) is defined by 
	\[
	R_{\mathcal S_{\!\!\rightmoon}}(\mathcal{SP}_{\alpha})
	=
	\sup\left\{
	r\in(0,1]:
	\left|\left(W(z)\right)^2-1\right|<2\left| W(z) \right|
	\text{ for all }|z|<r,\ f\in\mathcal{SP}_{\alpha}
	\right\}.
	\]
	
	We determine these radii for the class \(\mathcal{SP}_{\alpha}\) by analyzing the tangency of the boundary curves of the corresponding regions. The well-known radius results for the class \(\mathcal{ST}\) associated with the parabolic, lemniscate and lune regions \cite{MR1415180, MR3902942, MR4861252} are recovered as special cases of our results when \(\alpha=0.\)
	
	\section{Preliminaries}
	Before presenting the main results, we briefly recall the construction and a fundamental property of the resultant. Resultants provide an algebraic method for eliminating variables from systems of polynomial equations. Consider the system
	\[
	\begin{cases}
		f(x,y,z)=0,\\
		g(x,y,z)=0,\\
		h(x,y,z)=0.
	\end{cases}
	\]
	To eliminate the variable \(z\), one may compute the resultants of suitable pairs of polynomials, treating them as polynomials in \(z\) whose coefficients depend on \(x\) and \(y\). For instance, let
	\[
	\begin{aligned}
		f(z)&=a_nz^n+a_{n-1}z^{n-1}+\cdots+a_0,\\
		g(z)&=b_mz^m+b_{m-1}z^{m-1}+\cdots+b_0,
	\end{aligned}
	\qquad n>m,
	\]
	where \(a_i\) and \(b_i\) are functions of \(x\) and \(y\). The resultant \(\operatorname{Res}_z(f,g)\) is defined as the determinant of the Sylvester matrix \cite{10.1098/rspl.1850.0120} of \(f\) and \(g\), which is a square matrix of order \(n+m\). This matrix is constructed by arranging the coefficients of \(f(z)\), in descending powers of \(z\), in the first \(m\) rows, and those of \(g(z)\) in the next \(n\) rows. Each successive row is obtained by shifting the coefficients one position to the right and filling the remaining positions with zeros. The resultant is a polynomial in \(x\) and \(y\), and its vanishing is a necessary condition for \(f\) and \(g\) to have a common root in \(z\) \cite{salmon1859lessons}.
	
	\begin{equation*}
		\operatorname{Res}_z(f,g)=
		\begin{vmatrix}
			a_n & a_{n-1} & \cdots & a_0 & 0 & \cdots & 0\\
			0 & a_n & a_{n-1} & \cdots & a_0 & \cdots & 0\\
			\vdots & & \ddots & & & \ddots & \vdots\\
			0 & \cdots & 0 & a_n & a_{n-1} & \cdots & a_0\\[2pt]
			b_m & b_{m-1} & \cdots & b_0 & 0 & \cdots & 0\\
			0 & b_m & b_{m-1} & \cdots & b_0 & \cdots & 0\\
			\vdots & & \ddots & & & \ddots & \vdots\\
			0 & \cdots & 0 & b_m & b_{m-1} & \cdots & b_0
		\end{vmatrix}_{(n+m)\times(n+m)}.
	\end{equation*}
	Let \(R_1=\operatorname{Res}_z(f,g)\) and \(R_2=\operatorname{Res}_z(g,h)\). The vanishing of \(R_1\) indicates that \(f\) and \(g\) have a common root in \(z\), while the vanishing of \(R_2\) indicates that \(g\) and \(h\) have a common root in \(z\). Treating \(R_1\) and \(R_2\) as polynomials in \(y\), with coefficients depending on \(x\), we eliminate \(y\) by computing the resultant
	\[
	R(x)=\operatorname{Res}_y(R_1,R_2),
	\]
	which is a polynomial in \(x\). Every solution \((x,y,z)\) of the original system necessarily yields \(R(x)=0\), under the usual assumptions on the polynomial degrees. However, the converse need not hold: a root of \(R(x)=0\) may arise from the elimination process without corresponding to a solution of the original system. Therefore, the roots of \(R(x)=0\) provide candidate values of \(x\), which must be checked against the original system. Among the solutions produced by the candidate values of \(x,\) we call a solution \((x,y,z)\) \textit{admissible} if it satisfies all three equations of the original system. Solutions of the eliminated equations that do not satisfy the original system are termed \textit{extraneous solutions}.
	
	We intend to find the radius of starlikeness with respect to several domains.  Therefore, we want to find the variability region of \(zf'(z)/f(z)\) for \(f\in\mathcal{SP}_{\alpha}.\)  For every \(f\in\mathcal{SP}_{\alpha}\), there exists a function \(p\in\mathcal{P},\) \cite{MR704183} satisfying
	\begin{equation*}\label{speqn}
		e^{i\alpha}\frac{zf'(z)}{f(z)}
		=
		p(z)\cos\alpha+i\sin\alpha.
	\end{equation*}
	Let $
	q(z)= {zf'(z)}/{f(z)}
	$
	so that
	\begin{equation*}\label{qexp}
		q(z)
		=
		e^{-i\alpha}\bigl(p(z)\cos\alpha+i\sin\alpha\bigr),
	\end{equation*}
	from which \(p(z)\) can be written as
	\begin{equation*}\label{pdefn}
		p(z)=\frac{e^{i\alpha}q(z)-i\sin\alpha}{\cos\alpha}.
	\end{equation*}
	Since \(p\in\mathcal P\), the classical Carath\'eodory estimate \cite{ali2026introduction} yields
	\[
	\left|
	p(z)-\frac{1+r^2}{1-r^2}
	\right|
	\le
	\frac{2r}{1-r^2},
	\qquad |z|=r<1.
	\]
	From this estimate, it can be easily derived that 
	\begin{equation}\label{p1reqdisk}
		|q(z)-(u(r)+iv(r))|\le R(r),
	\end{equation}
	where the center and radius are given by
	\[
	u(r)=\frac{1+r^2\cos2\alpha}{1-r^2},
	\qquad
	v(r)=-\frac{r^2\sin2\alpha}{1-r^2},
	\qquad
	R(r)=\frac{2r\cos\alpha}{1-r^2}.
	\]
	The disk \eqref{p1reqdisk} describes the exact variability region of
	\(q(z)=zf'(z)/f(z)\) for \(f\in\mathcal{SP}_{\alpha}\) and \(|z|\le r\).
	Indeed, for the function \(f=f_0\) such that
	\[
	\frac{zf_0'(z)}{f_0(z)}
	=
	e^{-i\alpha}
	\left(
	\cos\alpha\,\frac{1+z}{1-z}
	+i\sin\alpha
	\right),
	\]
	the boundary of the disk \eqref{p1reqdisk} is attained. Consequently, the image of \(|z|\le r\) under \(q(z)=zf_0'(z)/f_0(z)\) is precisely the disk \eqref{p1reqdisk}. 	%	For any function \(f\in \mathcal{SP}_{\alpha}\), there exists a \(p\in \mathcal{P}\) such that
	The boundary of the disk \eqref{p1reqdisk}, given by
	\(
	|q(z)-(u(r)+iv(r))|=R(r)
	\)
	admits the parametrization
	\[
	q(z)-(u(r)+iv(r))=R(r)e^{it},
	\qquad 0\le t<2\pi .
	\]
	We define \(\gamma\) to be the boundary of the disk,  therefore
	\begin{equation*}
		\gamma(t)
		:=
		\bigl(u(r)+R(r)\cos t\bigr)
		+
		i\bigl(v(r)+R(r)\sin t\bigr).
	\end{equation*}
	If we define \(x(r,t)=u(r)+R(r)\cos t \text{ and }
	y(r,t)=v(r)+R(r)\sin t,\) then the above equation can be written as 
	\begin{equation}\label{p1gammaeqn}
		\gamma(t)=x(r,t)+iy(r,t).
	\end{equation}
	Let \(\sin t=s, \cos t=c\), then we have \(
	x=u+Rc,
	\qquad
	y=v+Rs.
	\)

	\section{Radius of parabolic starlikeness}  We now determine the radius of parabolic starlikeness of the class \(\mathcal{SP}_{\alpha}\).  Since  the  variability region of \(zf'(z)/f(z)\) is the disk \eqref{p1reqdisk}, for  the proof of our next theorem,  it suffices to determine the largest \(r\) for which this disk is contained in the parabolic region \(\Omega_{par}\).

	\begin{theorem}Let  $0<|\alpha|<\pi/2$.  The radius of parabolic starlikeness for the class
		$\mathcal{SP}_\alpha$  is the smallest admissible root in $(0,1)$ of
		\begin{align}\label{p1redeq}
			T_{par}(r,\alpha)& :=(\cos8\alpha-8\cos6\alpha+4\cos4\alpha+40\cos2\alpha+35)r^6\\ \notag
			&\quad{} +(14\cos6\alpha+24\cos4\alpha-6\cos2\alpha-40 )r^4 \\ \notag
			&\quad{}  -(12\cos4\alpha+12\cos2\alpha-24)r^2+(4\cos 2\alpha -4) =0.
		\end{align}
		This radius is sharp.
	\end{theorem}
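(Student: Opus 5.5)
Since the variability region of $q(z)=zf'(z)/f(z)$ on $|z|\le r$ is exactly the closed disk \eqref{p1reqdisk}, attained by $f_0$, the radius is the largest $r$ for which this disk lies in $\Omega_{par}=\{w=x+iy: x>|w-1|\}$. Written out, the condition $x>|w-1|$ is $x>1/2$ together with $y^2<2x-1$. For small $r$ the disk is a tiny neighbourhood of $1$, so it lies inside $\Omega_{par}$. Containment first fails at the radius $r_0$ where the circle $\gamma$ of \eqref{p1gammaeqn} first touches the parabola $y^2=2x-1$. I therefore look for the smallest $r$ at which $\gamma$ and the parabola are tangent.

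Tangency gives three polynomial equations in $(r,c,s)$, with $x=u+Rc$ and $y=v+Rs$:
\[
F:=(v+Rs)^2-2(u+Rc)+1=0,\qquad G:=c^2+s^2-1=0,
\]
\[
H:=(v+Rs)\,c+s=0 .
\]
The third equation says the circle's tangent $(-s,c)$ is parallel to the parabola's tangent $(1,y)$. After clearing the common denominator $1-r^2$, all three are polynomials in $c,s$ and $r^2$, with coefficients trigonometric in $\alpha$.

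I will then apply the elimination of the Preliminaries:
\[
R_1=\operatorname{Res}_c(F,G),\qquad R_2=\operatorname{Res}_c(G,H),\qquad R(r)=\operatorname{Res}_s(R_1,R_2).
\]
Since $F$ and $H$ are linear in $c$, the first two resultants amount to direct substitution. Expressing the result with multiple-angle identities and removing extraneous factors should leave, up to nonzero factors (powers of $(1-r^2)$, $\cos\alpha$, $r$), the polynomial $T_{par}(r,\alpha)$ of \eqref{p1redeq}.

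As a consistency check, at $\alpha=0$ every coefficient vanishes, so the factorization must be handled for $\alpha\neq0$. This explains the hypothesis $0<|\alpha|<\pi/2$. I would still compare the limit $\alpha\to0$ with R{\o}nning's value $1/\sqrt{2}\cdots$ (the known $\mathcal{ST}$ radius) where possible.

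The main obstacle is justifying that the smallest admissible root of $T_{par}$ in $(0,1)$ is the containment radius. The argument would run as follows.
\begin{itemize}
\item[(i)] Roots of $T_{par}$ include extraneous ones. Admissibility requires that $(c,s)$ is real with $c^2+s^2=1$, and that the touching point satisfies $x\ge1/2$, i.e.\ lies on the actual parabola boundary rather than being spurious from squaring.
\item[(ii)] The disk and the parabola vary continuously in $r$, so the first $r$ at which the disk leaves $\Omega_{par}$ must be a boundary tangency. Interior tangency is impossible just before the disk first exits. Hence $r_0$ is among the admissible roots, and no smaller admissible root can exist, since that would already be a touching point.
\item[(iii)] Because $\Omega_{par}$ is convex, tangency from inside is unique for each $r$. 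For $r<r_0$ the disk is strictly inside, by continuity from $r$ near $0$.
\end{itemize}
Sharpness follows because the boundary of the disk is attained by $f_0$. At $r=r_0$ some $z$ with $|z|=r_0$ gives $q(z)$ on $\partial\Omega_{par}$, and slightly beyond $r_0$ the value lies outside.
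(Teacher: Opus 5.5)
Your strategy is the paper's: the exact variability disk, the tangency system $F=0$, $F_t=0$, $c^2+s^2=1$, two resultant eliminations, the smallest admissible root, and sharpness via $f_0$. Your $H$ is exactly $F_t/(2R)$; the tangent to $y^2=2x-1$ at $(x,y)$ is $(y,1)$, not $(1,y)$, but $H$ itself is right. Your point (ii) is sound and more explicit than what the paper writes. The disks are nested; at $r_0$ the contact parameter maximizes $F(r_0,\cdot)$, so $F_t=0$ there. For $r<r_0$ no point of the circle lies on the parabola, so no admissible root lies below $r_0$.

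\textbf{The gap: the eliminant is not $T_{par}$ up to harmless factors.} This is the step that carries the content of the theorem, and you only assert it. With your elimination order it is false as stated. Both $R_1=\operatorname{Res}_c(F,G)$ and $R_2=\operatorname{Res}_c(G,H)$ pass through the quadratic $G$, so the requirement that $F$ and $H$ vanish at the \emph{same} $c$ is lost. Indeed $R_1(s)=F(\sqrt{1-s^2},s)\,F(-\sqrt{1-s^2},s)$, and therefore
\[
\operatorname{Res}_s(R_1,R_2)=R^{8}\prod_{j=1}^{4}F(c_j,s_j)\prod_{j=1}^{4}F(-c_j,s_j),
\]
where $(c_j,s_j)$ are the four solutions of $G=H=0$.
\begin{itemize}
\item The second product is spurious: it records a point of the circle on the parabola whose mirror point $(-c,s)$ satisfies the normal condition.
\item It is not a power of $r$, $1-r^2$ or $\cos\alpha$. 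Already at $\alpha=0$ your eliminant vanishes at $r=1/\sqrt2$, where $T_{par}(r,0)\neq 0$.
\item So you would have to factor the $8\times 8$ determinant, split this factor off, and show that $r_0$ is a zero of what remains.
\end{itemize}
Two cleaner routes keep the same $c$ throughout:
\begin{itemize}
\item Eliminate $c$ between the two equations that are linear in $c$: substitute $c=((v+Rs)^2-2u+1)/(2R)$ into both $G$ and $H$, then eliminate $s$.
\item Follow the paper. Replace $s^2$ by $1-c^2$ in $F$, so that $F$ and $F_t$ are linear in $s$, and take their resultant, a cubic $R_1(c)$. Substitute $s=-h_0/h_1$ into $s^2=1-c^2$ to get a quartic $G(c)$. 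Then $\operatorname{Res}_c(R_1,G)=131072\cos^{14}\alpha\,\sin^2\alpha\,(r-1)^5r^{18}(1+r)^5\,T_{par}$.
\end{itemize}
Until one of these computations is carried out, the identification with $T_{par}$ is unproved.

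\textbf{Your consistency check is wrong and misreads the hypothesis.} We have $T_{par}(r,0)=72r^6-8r^4=8r^4(9r^2-1)$, which is not identically zero. Its only root in $(0,1)$ is $1/3$, the known radius for $\mathcal{ST}$, not $1/\sqrt2$. For $\alpha=0$ the leftmost point $(1-r)/(1+r)$ of the disk reaches the vertex $1/2$ exactly at $r=1/3$.

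What vanishes identically at $\alpha=0$ is the paper's unnormalized resultant, through its factor $\sin^2\alpha$. That factor is the contribution of the denominators $h_1(c_j)$ of $s$ at the roots of $G$. At $\alpha=0$ one has $h_0\equiv 0$, so $G$ acquires the root where $h_1=0$. Dividing out this factor is why the derivation assumes $\alpha\neq 0$. It is an artifact of the elimination, not a degeneration of $T_{par}$, and your own eliminant does not vanish identically at $\alpha=0$ either.

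\textbf{A minor point in (iii).} The claim that convexity makes the inner tangency unique is false in general. The disk centred at $a>3/2$ on the real axis with radius $\sqrt{2a-2}$ touches the parabola at two points. You do not need the claim, since (ii) already gives everything.
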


	\begin{proof}

		Let
		\(
		\Omega_{par}=\left\{w\in\mathbb{C}:|w-1|<\operatorname{Re}(w)\right\}.
		\)  Then the boundary of \(\Omega_{par}\) is characterized by
		\(
		|w-1|=\operatorname{Re}(w).
		\)
		Writing \(w=x+iy\), the boundary equation becomes
		\(
		(x-1)^2+y^2=x^2,
		\)
		which simplifies to
		\[
		\Phi(x,y):=y^2-2x+1=0.
		\]
		Furthermore, we observe that
		\[
		\nabla\Phi(x,y)=(-2,\,2y)\neq (0,0)
		\]
		at every point in the domain which confirms that \(\Phi\) determines a smooth curve.  We define the function $F: (0,1) \times [0,2\pi] \to \mathbb{R}$ along
		the curve $\gamma(t) $ in \eqref{p1gammaeqn} by
		\begin{equation*}
			F(r,t) := \Phi(\gamma(t)) = y^2(r,t) - 2(x(r,t)) + 1.
		\end{equation*}
		The sign of \(F(r,t)\) determines the position of the point \(\gamma(t)\) relative to the region \(\Omega_{par}\). More precisely,
		\begin{enumerate}
			\item \(F(r,t)=0\) if and only if \(\gamma(t)\) lies on the boundary of \(\Omega_{par}\),
			\item \(F(r,t)<0\) if and only if \(\gamma(t)\in\Omega_{par}\),
			\item \(F(r,t)>0\) if and only if \(\gamma(t)\notin\overline{\Omega_{par}}\).
		\end{enumerate}
		Consequently, the disk determined by \(\gamma(t)\) is contained in \(\Omega_{par}\) only when
		\[
		F(r,t)\le 0
		\qquad \text{for all } t.
		\]
		The extremal situation occurs when equality is attained for at least one value of \(t\).

		\textbf{Tangency condition:} At the extremal radius, the boundary circle \(\gamma\) is tangent to the parabola defining \(\partial\Omega_{par}\). As a result, there exists \(t=t_0\) such that
		\begin{align}\label{p1cond1}
			F(r,t)&=0\\
			\label{p1cond2}
			\frac{\partial F(r,t)}{\partial t}&=0.
		\end{align}
		From a geometric perspective, \(\nabla\Phi\bigl(\gamma(t_0)\bigr)\) represents a normal vector to the boundary \(\partial\Omega_{par}\), whereas \(\gamma'(t_0)\) is the tangent vector to the boundary circle at the point of contact. As a result, the tangency condition is defined by the orthogonality relation
		\(
		\nabla\Phi(\gamma(t_0))\cdot\gamma'(t_0)=0,
		\)
		which is equivalent to
		\[
		\frac{\partial F}{\partial t}(r,t_0)=0.
		\]
		Evaluating \eqref{p1cond1} and \eqref{p1cond2} using \eqref{p1gammaeqn},  we obtain
		\begin{align*}
			F(r,t)=&(v+Rs)^2-2(u+Rc)+1=0,\\
			F_t(r,t)=&2(v+Rs)Rc+2Rs=0.
		\end{align*}
		Substituting the expressions for \(u(r)\), \(v(r)\), and \(R(r)\) given in \eqref{p1reqdisk} and multiplying throughout by \((1-r^2)^2\) to eliminate denominators and applying the identity \(s^2=1-c^2\), we have
		\begin{align}\label{p1f1andf2}
			F_1&:=(1-r^2)^2F(r,t)
			={}g_0+g_1s=0\\ \notag
			F_2&:=(1-r^2)^2F_t(r,t)=h_0+h_1s
			=0.
		\end{align}
		where \begin{align*}
			g_0&=
			r^4\sin^22\alpha
			-2(1-r^2)(1+r^2\cos2\alpha)
			+(1-r^2)^2
			\\
			&\quad {}+4r^2\cos^2\alpha
			-4r\cos\alpha(1-r^2)c-4r^2\,c^2\cos^2\alpha
			\\
			g_1&={}-4r^3\cos\alpha\sin2\alpha \\
			h_0&={}-4r^3\,c\cos\alpha\sin2\alpha \\
			h_1&=4r\cos\alpha\bigl((1-r^2)+2r\,c \cos\alpha\bigr)
		\end{align*}
		Solving \(F_2=0\) for \(s\), we have
		\begin{equation}\label{p1sval}
			s=
			\frac{r^2\,c \sin2\alpha}
			{1-r^2+2r\,c \cos\alpha} \qquad \left(0<|\alpha|<\frac{\pi}{2}\right)
		\end{equation}
		If the denominator vanishes, then it follows from \eqref{p1f1andf2} that
		\[
		F_2={}-4r^3\,c\cos\alpha\sin2\alpha=0.
		\]
		Solving this equation together with \(F_1=0\) for $r,$ yields solutions satisfying \(r>	R_{\mathcal S_{par}}(\mathcal{SP}_{\alpha})\). Since \(	R_{\mathcal S_{par}}(\mathcal{SP}_{\alpha})\) is the smallest admissible radius for the system \eqref{p1f1andf2} at which tangency occurs, every other solution corresponds to a tangency that takes place after the disk has already ceased to be contained in \(\Omega_{par}\). Consequently, the case
		\(
		1-r^2+2r\,c \cos\alpha=0
		\)
		may be excluded from further consideration.
		Substituting \eqref{p1sval} into the identity \(s^2=1-c^2\), yields
		\begin{equation} \label{p1geqn}
			G(c)	=b_4c^4+b_3c^3+b_2c^2+b_1c+b_0
		\end{equation}
		where the coeffecients \(b_i\) are given by
		\begin{equation*}
			\begin{aligned}
				b_4&=4r^2\cos^2\alpha, &
				b_3&=4r\cos\alpha(1-r^2),\\
				b_2&=(1-r^2)^2-4r^2\cos^2\alpha+r^4\sin^22\alpha,
				&
				b_1&={}-4r\cos\alpha(1-r^2),\\
				b_0&={}-(1-r^2)^2. 
			\end{aligned}
		\end{equation*}

		\textbf{Resultant computation:}
		The equations \(F_1=0\) and \(F_2=0\) in \eqref{p1f1andf2} admit a common solution in \(s\) if and only if their resultant with respect to \(s\) vanishes\cite{salmon1859lessons}. Since both polynomials are linear in \(s\), the resultant is given explicitly by
		\[
		\operatorname{Res}_s(F_1,F_2)
		=
		\begin{vmatrix}
			g_1 & g_0\\
			h_1 & h_0
		\end{vmatrix}
		=
		g_1h_0-g_0h_1=0.
		\]
		Substituting the corresponding coefficients \(g_0,g_1,h_0,\) and \(h_1\), we obtain
		\begin{align*}
			\operatorname{Res}_s(F_1,F_2)
			& =\,
			(-4r^3\cos\alpha\sin2\alpha)
			(-4r^3\,c\cos\alpha\sin2\alpha)
			\\
			&\quad {} -
			\Bigl(
			r^4\sin^22\alpha
			-2(1-r^2)(1+r^2\cos2\alpha)+(1-r^2)^2
			\\
			&\qquad +4r^2\cos^2\alpha-4r\cos\alpha(1-r^2)c
			-4r^2\,c^2\cos^2\alpha
			\Bigr)
			\\
			&\quad \times
			\Bigl(
			4r\cos\alpha\bigl((1-r^2)+2r\,c\cos\alpha\bigr)
			\Bigr)=0.
		\end{align*}
		After expanding, simplifying and collecting like powers of \(c\), the resultant is written as
		\begin{equation}\label{p1res1}
			\begin{split}
				\operatorname{Res}_s(F_1,F_2)
				& =a_3c^3+a_2c^2+a_1c+a_0=R_1(c)
			\end{split}
		\end{equation}
		where the coeffecients \(a_i\) are given by
		\begin{align*}
			a_3&= 32r^4\cos^4\alpha\\
			a_2&= 48r^3\cos^3\alpha(1-r^2)\\
			a_1&=16r^6\sin^22\alpha \cos^2\alpha
			+16r^2\cos^2\alpha(1-r^2)^2
			\\
			&\qquad
			-8r^2\cos^2\alpha
			\big(
			r^4\sin^22\alpha
			-2(1-r^2)(1+r^2\cos2\alpha)
			\\
			&\qquad
			+(1-r^2)^2
			+4r^2\cos^2\alpha
			\big)\\
			a_0&={}-4r\cos\alpha(1-r^2)
			\Big(
			r^4\sin^22\alpha
			-2(1-r^2)(1+r^2\cos2\alpha)
			\\
			&\qquad
			+(1-r^2)^2
			+4r^2\cos^2\alpha
			\Big)
		\end{align*}
		\noindent
		To eliminate the variable \(c\) from \eqref{p1res1} and \eqref{p1geqn}, we compute the resultant of the polynomials. Since \(R_1\) is a polynomial in \(c\) of order \(3\) and \(G\) is a polynomial in \(c\) of order \(4,\) the resultant is given by the determinant of the corresponding \emph{Sylvester} matrix  of order \(7\). The resultant vanishes precisely when those two polynomials have a common zero. Accordingly, the elimination condition is given by
		\begin{equation*} 
			\operatorname{Res}_c(R_1,G)=\left|
			\begin{array}{ccccccccccccc}
				a_3&a_2&a_1&a_0&0&0&0\\
				0&a_3&a_2&a_1&a_0&0&0\\
				0&0&a_3&a_2&a_1&a_0&0\\
				0&0&0&a_3&a_2&a_1&a_0\\
				b_4&b_3&b_2&b_1&b_0&0&0\\
				0&b_4&b_3&b_2&b_1&b_0&0\\
				0&0&b_4&b_3&b_2&b_1&b_0
			\end{array}
			\right|=0,
		\end{equation*}  which on substituting and evaluating the coefficients produces
		\begin{align*}
			\operatorname{Res}_c(R_1,G)&=\;131072 (-1 + r)^5 r^{18} (1 + r)^5 \cos^{14}\alpha\, \sin^2\alpha \bigl(-4 + 24 r^2 - 40 r^4 + 35 r^6 \\
			&\qquad{}+( 4-12r^2-6r^4+40r^6)\cos2\alpha +(- 12 r^2+24r^4+4r^6)\cos4\alpha \\
			&\qquad{}+
			(14 r^4-8r^6)\cos6\alpha  + r^6 \cos8\alpha \bigr)=0.
		\end{align*}
		Extracting the common non-vanishing factor
		\[
		131072\,(\cos^{14}\alpha)(\sin^2\alpha)\,(-1 + r)^5 r^{18} (1 + r)^5, \qquad \left(0<|\alpha| < \frac{\pi}{2}\right)
		\]
		we introduce the normalized polynomial
		\[
		T_{par}(r,\alpha)
		:=
		\frac{
			\operatorname{Res}_c(R_1,G)
		}{
			131072\,(\cos^{14}\alpha)(\sin^2\alpha)\,(-1 + r)^5 r^{18} (1 + r)^5
		}.
		\]
		Consequently, the resultant equation reduces to \eqref{p1redeq}. It follows that the admissible radius \(	R_{\mathcal S_{par}}(\mathcal{SP}_{\alpha})\) is precisely the smallest root of \eqref{p1redeq} lying in the interval \((0,1)\). Indeed, this value corresponds to the first occurrence of tangency between the boundary circle and the parabola defining \(\partial\Omega_{par}\). Eventually, \(	R_{\mathcal S_{par}}(\mathcal{SP}_{\alpha})\) represents the largest radius for which the associated disk is completely contained in \(\Omega_{par}\).
	\end{proof}

	\noindent For some specific values of $\alpha$ we have the following results.
	
	\begin{corollary}
		Let $ 	R_{\mathcal S_{par}}(\mathcal{SP}_{\alpha})$ be the radius of parabolic starlikeness for the class
		$\mathcal{SP}_\alpha$. Then  \begin{align*}
			R_{\mathcal S_{par}}(\mathcal{SP}_{\pi/3}) &= 0.455360692140855,
			& 	R_{\mathcal S_{par}}(\mathcal{SP}_{\pi/4}) &= 0.3826834323650898,\\
			R_{\mathcal S_{par}}(\mathcal{SP}_{\pi/6}) &= 0.3477134034204693,
			& 	R_{\mathcal S_{par}}(\mathcal{SP}_{\pi/8}) &= 0.33913699119761614.
		\end{align*}These radii are sharp.
	\end{corollary}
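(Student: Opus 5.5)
The corollary follows by specializing the theorem to each of the four angles. For each value \(\alpha\in\{\pi/3,\pi/4,\pi/6,\pi/8\}\) I will substitute the numerical values of \(\cos2\alpha,\cos4\alpha,\cos6\alpha,\cos8\alpha\) into \eqref{p1redeq}. This gives a cubic in \(\rho=r^2\) with explicit algebraic coefficients.

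For example, at \(\alpha=\pi/4\) we have \(\cos2\alpha=\cos6\alpha=0\), \(\cos4\alpha=-1\) and \(\cos8\alpha=1\). Then
\[
T_{par}=32r^6-64r^4+36r^2-4=4(8\rho^3-16\rho^2+9\rho-1).
\]
Since \(\rho=1\) is a root, this factors as
\[
4(\rho-1)(8\rho^2-8\rho+1).
\]
The smallest root is \(\rho=(2-\sqrt2)/4\), so \(r=\sin(\pi/8)=0.38268\ldots\), in agreement with the stated value. The other three angles are handled the same way. At \(\pi/3\) and \(\pi/6\) the coefficients are rational, and at \(\pi/8\) they involve \(\sqrt2\). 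In those cases I will solve the cubic numerically or by Cardano's formula and list the roots lying in \((0,1)\).

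Next comes the admissibility check. For the smallest positive root \(r_0\), I will recover the corresponding \(c\) as the common root of \(R_1(c)\) and \(G(c)\), and then recover \(s\) from \eqref{p1sval}. I must confirm that \(c\in[-1,1]\), that \(s^2+c^2=1\), and that \(F_1=F_2=0\) hold, so that the root is a genuine tangency and not an extraneous root produced by elimination.

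I also need \(F(r,t)\le0\) for all \(t\) when \(r\le r_0\). It suffices to check this at a single smaller radius, say \(r=r_0/2\), by verifying \(\max_t F<0\) there. Containment then persists up to the first tangency, because \(F\) depends continuously on \(r\).

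The main obstacle is this admissibility and minimality step. The resultant can produce spurious roots smaller than the true radius, for example roots corresponding to tangency with the lower branch at a point with \(|c|>1\), or roots from the excluded case where the denominator in \eqref{p1sval} vanishes. Each root in \((0,r_0)\) must therefore be ruled out explicitly, and I would first try to do this by evaluating \(c\) and checking whether it lies in \([-1,1]\).

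Sharpness follows from the theorem: the extremal function \(f_0\) attains the boundary of the disk \eqref{p1reqdisk}, so at \(|z|=r_0\) the value \(zf_0'(z)/f_0(z)\) reaches \(\partial\Omega_{par}\).
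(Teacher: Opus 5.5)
Your proposal is correct and follows the same route as the paper, which likewise specializes \eqref{p1redeq} at each $\alpha$ to a cubic in $r^2$ (for instance $4(-1+9r^2-16r^4+8r^6)$ at $\alpha=\pi/4$) and takes its least root in $(0,1)$. Your added admissibility and containment checks make explicit what the paper only asserts, and they go through here: the excluded case $1-r^2+2rc\cos\alpha=0$ together with $F_2=0$ forces $c=0$ and $r=1$, and in each of the four cases the least root gives a genuine contact point (e.g.\ $w=(2-\sqrt2)+i(1-\sqrt2)$ when $\alpha=\pi/4$).
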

	
	\begin{proof}
		For \( \alpha=\pi/3 \), the resultant equation takes the form
		\[
		T_{par}\left(r,\frac{\pi}{3}\right)
		=
		-6 + 36 r^2 - 35 r^4 + \frac{9 r^6}{2}
		=0.
		\]
		Accordingly, the admissible radius \(	R_{\mathcal S_{par}}(\mathcal{SP}_{\pi/3})\) is obtained as the least root of \(T\left(r,\pi/3\right)\) in the interval \((0,1)\), namely
		\[
		R_{\mathcal S_{par}}(\mathcal{SP}_{\pi/3})=0.455360692140855.
		\]

		For \( \alpha=\pi/6 \), the resultant equation simplifies to
		\[
		T_{par}\left(r,\frac{\pi}{6}\right)
		=
		-2 + 24 r^2 - 69 r^4 + \frac{121}{2}r^6
		=0.
		\]
		Consequently, the admissible radius \(	R_{\mathcal S_{par}}(\mathcal{SP}_{\pi/6})\) is characterized as the smallest zero of \(T_{par}\left(r,\pi/6\right)\) contained in the interval \((0,1)\) which is given by
		\[
		R_{\mathcal S_{par}}(\mathcal{SP}_{\pi/6})=0.3477134034204693.
		\]

		\begin{figure}[htbp]
			\centering
			
			\begin{minipage}{0.48\textwidth}
				\centering
				\begin{tikzpicture}
					\begin{axis}
						[
						width=\linewidth,
						axis line style={thick},
						axis lines=middle,
						xmin=-0.5, xmax=2,
						ymin=-1, ymax=1,
						axis equal image,
						samples=200,
						xtick=\empty,
						ytick=\empty,
						xlabel={$x$},
						ylabel={$y$},
						label style={font=\footnotesize},
						]
						
						% Parabola y^2 = 2x - 1
						\addplot[
						black,
						thick,
						domain=-1.2:1.2,
						samples=400
						]
						(
						{(x^2+1)/2},
						{x}
						);
						
						% Extremal circle
						\addplot[
						black,
						thick,
						domain=0:360,
						samples=400
						]
						(
						{1.1307981069175628 + 0.5744813251342448*cos(x)},
						{-0.22654896671504468 + 0.5744813251342448*sin(x)}
						);
						
						% Centre
						\addplot[
						only marks,
						mark=*,
						mark size=2pt
						]
						coordinates {(1.1307981069175628,-0.22654896671504468)};
						
						\node[below]
						at (axis cs:1.1307981069175628,-0.22654896671504468)
						{$u+iv$};
						
					\end{axis}
				\end{tikzpicture}
			\end{minipage}\hfill
			\begin{minipage}{0.48\textwidth}
				\centering
				\begin{tikzpicture}
					\begin{axis}
						[
						width=\linewidth,
						axis line style={thick},
						axis lines=middle,
						xmin=-0.5, xmax=2,
						ymin=-1, ymax=1,
						axis equal image,
						samples=200,
						xtick=\empty,
						ytick=\empty,
						xlabel={$x$},
						ylabel={$y$},
						label style={font=\footnotesize},
						]
						% Parabola y^2 = 2x - 1
						\addplot[
						black,
						thick,
						domain=-1.2:1.2,
						samples=400
						]
						(
						{(x^2+1)/2},
						{x}
						);
						
						% Extremal circle
						\addplot[
						black,
						thick,
						domain=0:360,
						samples=400
						]
						(
						{1.2062994740159003 + 0.6850875214190641*cos(x)},
						{-0.11910705685675822+ 0.6850875214190641*sin(x)}
						);
						
						% Centre
						\addplot[
						only marks,
						mark=*,
						mark size=2pt
						]
						coordinates {(1.2062994740159003,-0.11910705685675822)};
						
						\node[below]
						at (axis cs:1.2062994740159003,-0.11910705685675822)
						{$u+iv$};
						
					\end{axis}
				\end{tikzpicture}
			\end{minipage}
			
			\caption{The extremal disk tangent to the parabola \(y^2=2x-1\) for \(\alpha=\pi/3, \pi/6\).}
			\label{fig:parabolic-pi6}
			
		\end{figure}
		For \( \alpha=\pi/4 \), the resultant equation reduces to
		\[
		T_{par}\left(r,\frac{\pi}{4}\right)
		=
		4\left(-1 + 9r^2 - 16r^4 + 8r^6\right)
		=0.
		\]
		Hence, the admissible radius \(	R_{\mathcal S_{par}}(\mathcal{SP}_{\pi/4})\) is determined by the smallest root of \(T_{par}\left(r,\pi/4\right)\) belonging to the interval \((0,1)\). A numerical computation yields
		\[
		R_{\mathcal S_{par}}(\mathcal{SP}_{\pi/4})= 0.3826834323650898.
		\]
		
		For \( \alpha=\pi/8 \), the resultant equation assumes the form
		\[
		T_{par}\left(r,\frac{\pi}{8}\right)
		=
		-4 + 2\sqrt{2}
		- 6(-4+\sqrt{2})r^2
		- 10(4+\sqrt{2})r^4
		+ (34+24\sqrt{2})r^6
		=0.
		\]
		Accordingly, the admissible radius \(	R_{\mathcal S_{par}}(\mathcal{SP}_{\pi/8})\) is identified as the smallest root of \(T_{par}\left(r,\pi/8\right)\) contained in the interval \((0,1)\). Numerically, one obtains
		\[
		R_{\mathcal S_{par}}(\mathcal{SP}_{\pi/8})=0.33913699119761614.\qedhere
		\]
		
		\begin{figure}[htbp]
			\centering
			\begin{minipage}{0.48\textwidth}
				\centering
				\begin{tikzpicture}
					\begin{axis}
						[
						width=\linewidth,
						axis line style={thick},
						axis lines=middle,
						xmin=-0.5, xmax=2,
						ymin=-1, ymax=1,
						axis equal image,
						samples=200,
						xtick=\empty,
						ytick=\empty,
						xlabel={$x$},
						ylabel={$y$},
						label style={font=\footnotesize},
						]
						
						% Parabola y^2 = 2x - 1
						\addplot[black, thick, domain=-1.2:1.2, samples=200]
						({(x^2+1)/2}, {x});
						
						% Extremal circle
						\addplot[black, thick, domain=0:360, samples=300]
						({1.17157287525381 + 0.6340506711244288*cos(x)},
						{-0.1715728752538099 + 0.6340506711244288*sin(x)});
						
						% Centre
						\addplot[only marks, mark=*, mark size=1.5pt]
						coordinates {(1.17157287525381,-0.1715728752538099)};
						
						\node[below, font=\footnotesize]
						at (axis cs:1.17157287525381,-0.1715728752538099) {$u+iv$};
					\end{axis}
				\end{tikzpicture}
			\end{minipage}\hfill
			\begin{minipage}{0.48\textwidth}
				\centering
				\begin{tikzpicture}
					\begin{axis}
						[
						width=\linewidth,
						axis line style={thick},
						axis lines=middle,
						xmin=-0.5, xmax=2,
						ymin=-1, ymax=1,
						axis equal image,
						samples=200,
						xtick=\empty,
						ytick=\empty,
						xlabel={$x$},
						ylabel={$y$},
						label style={font=\footnotesize},
						]
						
						% Parabola y^2 = 2x - 1
						\addplot[black, thick, domain=-1.2:1.2, samples=200]
						({(x^2+1)/2}, {x});
						
						% Extremal circle
						\addplot[black, thick, domain=0:360, samples=300]
						({1.2218577289555161 + 0.7080828149946824*cos(x)},
						{-0.09189648025066896 + 0.7080828149946824*sin(x)});
						
						% Centre
						\addplot[only marks, mark=*, mark size=1.5pt]
						coordinates {(1.2218577289555161,-0.09189648025066896)};
						
						\node[below, font=\footnotesize]
						at (axis cs:1.2218577289555161,-0.09189648025066896) {$u+iv$};
					\end{axis}
				\end{tikzpicture}
			\end{minipage}
			\caption{The extremal disk tangent to the parabola \(y^2=2x-1\) for \(\alpha=\pi/4, \pi/8\).}
			\label{fig:parabolic-pi8}
		\end{figure}
	\end{proof}
	
	The case $\alpha=0 $ reduces the class \(\mathcal{SP}_\alpha\) to the class of starlike functions for which we already have results \cite{MR1415180}.  However, the tangency approach needs a separate discussion. We shall prove the following.
	\begin{corollary}  The radius of parabolic starlikeness for the class of starlike functions is
		$ 	R_{\mathcal S_{par}}(\mathcal{SP}_{0}) = 1/3$.
		This radius is sharp.
	\end{corollary}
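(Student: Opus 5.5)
For $\alpha=0$ the general theorem gives no information. The factor $\sin^2\alpha$ that was removed from $\operatorname{Res}_c(R_1,G)$ vanishes, so the resultant is identically zero and no polynomial $T_{par}(r,0)$ singles out the radius. I will therefore redo the tangency analysis directly. At $\alpha=0$ the quantities in \eqref{p1reqdisk} become
\[
u=\frac{1+r^2}{1-r^2},\qquad v=0,\qquad R=\frac{2r}{1-r^2},
\]
so the variability disk of $zf'(z)/f(z)$ is symmetric about the real axis. With $F(r,t)=y^2-2x+1$ evaluated along $\gamma$ as in the proof of the theorem, we get
\[
F=R^2s^2-2(u+Rc)+1,\qquad F_t=2R^2sc+2Rs=2Rs\,(Rc+1).
\]
The tangency system therefore splits into two branches: $s=0$, and $c=-1/R$.

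Rather than rely only on tangency, I will prove containment directly. Using $s^2=1-c^2$, I regard $F$ as a concave quadratic in $c\in[-1,1]$:
\[
\phi(c)=R^2(1-c^2)-2Rc-2u+1 .
\]
Its unconstrained maximum is at $c=-1/R$. When $R<1$, that is $r<\sqrt2-1$, this critical point lies outside $[-1,1]$. Hence $\phi$ is decreasing on $[-1,1]$ and $\max\phi=\phi(-1)=2R-2u+1$. The condition $\phi(-1)\le 0$ is equivalent to $u-R\ge 1/2$. Since $u-R=(1-r)/(1+r)$, this holds exactly when $r\le 1/3$.

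Because $1/3<\sqrt2-1$, the branch $c=-1/R$ is not needed for $r\le1/3$. That branch would correspond to tangency in the upper and lower portions of the parabola, and it can only occur for $r\ge\sqrt2-1$. So for every $r\le 1/3$ we have $F\le 0$ on the whole circle, and by the sign classification in the proof of the theorem the disk lies in $\overline{\Omega_{par}}$. For $|z|<1/3$ the point $zf'(z)/f(z)$ lies in the open disk, whose points are interior, so it lies in $\Omega_{par}$.

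For sharpness, take the extremal function $f_0$ with $\alpha=0$, which is the Koebe function, so that $zf_0'(z)/f_0(z)=(1+z)/(1-z)$. At $z=-1/3$ this equals $1/2$, which lies on $\partial\Omega_{par}$ since $|1/2-1|=\operatorname{Re}(1/2)$. For $r>1/3$ the leftmost point $(1-r)/(1+r)<1/2$ lies outside $\overline{\Omega_{par}}$. Hence $R_{\mathcal S_{par}}(\mathcal{SP}_0)=1/3$.

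The main point to handle carefully is excluding the second branch $c=-1/R$. The concavity argument does this: the critical point leaves $[-1,1]$ whenever $R<1$, and $R<1$ holds for all $r$ up to and beyond $1/3$.
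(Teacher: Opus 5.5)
Your argument is correct, but it is organized differently from the paper's.

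\textbf{How the two routes differ.} The paper only solves the tangency system $F=F_t=0$. It rules out the branch $Rc+1=0$ by substituting $c=-1/R$ into $F$, which forces $R^2-2u+2=4r^4/(1-r^2)^2$ to vanish, i.e.\ $r=0$. It then takes $s=0$, $c=-1$, $R=u-\tfrac12$ to get $r=1/3$. Along the way it tacitly assumes that the first tangency radius is the containment radius, and it leaves sharpness implicit.

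You instead maximize $F$ over the circle. Since $\partial_c\phi=-2R(Rc+1)<0$ on $[-1,1]$ when $R<1$, the maximum is $\phi(-1)=1-2(u-R)$. The identity $u-R=(1-r)/(1+r)$ turns containment into the explicit inequality $r\le 1/3$. The Koebe function at $z=-r$ then shows failure for every $r>1/3$.

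This gives a genuine two-sided proof and disposes of the second branch without computing it. The price is that it uses the fact that, for $v=0$, $F$ depends on $t$ only through $c$. That is no longer true for $\alpha\neq0$, where the paper's elimination is needed.

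\textbf{Two small corrections.} First, the pointwise sign classification only tells you that the circle lies in $\overline{\Omega_{par}}$. To pass to the closed disk, note that $\Phi(x,y)=y^2-2x+1$ is convex, so its maximum over the closed disk is attained on the boundary. For $|z|=\rho<1/3$ this even gives $\Phi<0$ on the whole disk, hence values in $\Omega_{par}$ directly.

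Second, your opening remark overstates things. It is true that $\operatorname{Res}_c(R_1,G)$ vanishes identically at $\alpha=0$, since there $R_1$ and $G$ share the factor $1-r^2+2rc$. However, the closed form \eqref{p1redeq} at $\alpha=0$ is $T_{par}(r,0)=72r^6-8r^4=8r^4(9r^2-1)$, whose only zero in $(0,1)$ is $1/3$. This is a useful consistency check, though not a proof, since the derivation of \eqref{p1redeq} divides by $\sin^2\alpha$.
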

	\begin{proof}
		For $\alpha =0$, the elimination of $s$ and $c$ happens in the first step and
		\[u(r)=\frac{1+r^2}{1-r^2},
		\qquad
		v(r)=0,
		\qquad
		R(r)=\frac{2r}{1-r^2}.\]
		From equation \eqref{p1f1andf2}, we have
		\[F_1= R^2s^2-2u-2Rc+1=0,  \qquad
		F_2=2Rs(Rc+1)=0.\]Both equations must vanish simultaneously for some value \(t=t_0\). Since
		$F_2=s(Rc+1)$,
		we must have either \(s=0\) or \(Rc+1=0\).
		First suppose that \(Rc+1=0\). Then
		$c=-1/R$,
		which is possible only when \(R\ge1\), equivalently \(r>\sqrt2-1\). Substituting
		\(c=-1/R\) into \(F_1\) and simplifying yields $
		R^2-2u+2=0$.
		Upon replacing \(R\) and \(u\) by their expressions in terms of \(r\), we obtain
		\(r=0\), a contradiction. Hence \(Rc+1\neq0\).
		Therefore \(s=0\). Since \(s^2+c^2=1\), it follows that \(c=\pm1\). Substituting
		\(c=\pm1\) into \(F_1\) gives
		$
		R=\pm\left(u-1/2\right)$.
		Since \(R>0\), we must have \(c=-1\) and
		$		R=u-1/2$.		Substituting the expressions for \(R\) and \(u\) and solving for \(r\), we obtain
		$
		R_{\mathcal S_{par}}(\mathcal{SP}_{0})=1/3$.
	\end{proof}
	%\begin{figure}[htbp]
	%			\centering
	%			
	%			\begin{tikzpicture}
		%				\begin{axis}
			%					[
			%					axis lines=middle,
			%					xmin=-0.5, xmax=2,
			%					ymin=-1, ymax=1,
			%					axis equal image,
			%					samples=200,
			%					xlabel={$x$},
			%					ylabel={$y$},
			%					]
			%					
			%					% Parabola y^2 = 2x - 1
			%					\addplot[
			%					black,
			%					thick,
			%					domain=-1.2:1.2,
			%					samples=400
			%					]
			%					(
			%					{(x^2+1)/2},
			%					{x}
			%					);
			%					
			%					% Extremal circle
			%					\addplot[
			%					red,
			%					thick,
			%					domain=0:360,
			%					samples=500
			%					]
			%					(
			%					{1.25 + 0.75*cos(x)},
			%					{0 + 0.75*sin(x)}
			%					);
			%					
			%					% Centre
			%					\addplot[
			%					only marks,
			%					mark=*,
			%					mark size=2pt
			%					]
			%					coordinates {(1.25,0)};
			%					
			%					\node[above right]
			%					at (axis cs:1.25,0)
			%					{$u+iv$};
			%					
			%				\end{axis}
		%			\end{tikzpicture}
	%			
	%			\caption{The extremal disk tangent to the parabola \(y^2=2x-1\) for \(\alpha=0\).}
	%			\label{fig:parabolic-alpha0}
	%			
	%		\end{figure}

	\section{Radius of lemniscate starlikeness} In this section, we determine the  radius of lemniscate starlikeness for the class
	$\mathcal{SP}_\alpha$.  %For two polynomials
	%\[
	%f(x)=a_mx^m+a_{m-1}x^{m-1}+\cdots+a_0
	%\quad\text{and}\quad
	%g(x)=b_nx^n+b_{n-1}x^{n-1}+\cdots+b_0,
	%\]
	%their \emph{Sylvester} matrix, denoted by \(S(f,g)\), is the \((m+n)\times(m+n)\) matrix obtained by placing the coefficients of \(f\) in \(n\) successive rows and those of \(g\) in \(m\) successive rows, each row shifted one place to the right from the preceding one. The resultant of \(f\) and \(g\) is defined by
	%$
	%\operatorname{Res}(f,g)=\det S(f,g).
	%$
	%For example, if
	%\[
	%R_1(c)=a_3c^3+a_2c^2+a_1c+a_0
	%\quad\text{and}\quad
	%G(c)=b_4c^4+b_3c^3+b_2c^2+b_1c+b_0,
	%\]
	%then the resultant in \eqref{p1Silver1} is the determinant of the Sylvester matrix $S(R_1,G)$.
	We consider the following two polynomialls
	\begin{equation} \label{p1lempoly}
	R_1(c)=a_5c^5+\dotsb+ a_1c+a_0
	\quad\text{and}\quad
	G(c)=b_8c^8+\dotsb+b_1c+b_0,
	\end{equation}
	where the coefficients \(a_i \text{ and } b_i\) are given by
	\begin{align*}
	a_5& =128  r^8 \cos^2\alpha (1 + \cos2\alpha)\bigl(-6 + 2 r^2 +(r^2- 8) \cos2\alpha - (2+2r^2) \cos4\alpha \\
	&\quad {} - r^2 \cos6\alpha \bigr)\\
	a_4& ={} -128  r^7 \cos^2\alpha(1 + \cos2\alpha) \bigl(-6 r + 2 r^3 + (12+8r^2) \cos\alpha  +(r^3- 8 r) \cos2\alpha \\
	&\quad{} + (4+6r^2) \cos3\alpha - 2 (r+r^3) \cos4\alpha  + 2 r^2 \cos5\alpha - r^3 \cos6\alpha \bigr)\\
	a_3& =-32  r^4 \cos^2\alpha (-1 + 12 r^2 - 68 r^4 + 60 r^6 - 7 r^8 -16r^3\cos\alpha(3+2r^2) \\
	&\quad{}+( 20 r^2- 148 r^4+ 90 r^6- 3 r^8 - 48 r^3 \cos\alpha - 32 r^5 \cos\alpha + 8 r^2 \cos2\alpha \\
	&\quad{}- 80 r^4 \cos2\alpha  + 22 r^6 \cos2\alpha-1)\cos2\alpha  +(- 16 r^3 - 24 r^5- 16 r^3 \cos2\alpha \\
	&\quad{}- 24 r^5 \cos2\alpha) \cos3\alpha  +(- 18 r^4 - 40 r^6+ 14 r^8- 18 r^4 \cos2\alpha- 56 r^6 \cos2\alpha  \\
	&\quad{}+ 2 r^8 \cos2\alpha- 8 r^6 \cos 4\alpha- 8 r^8 \cos 4\alpha)\cos4\alpha  - 8 r^5\cos5\alpha - 8 r^5\cos2\alpha \cos5\alpha  \\
	&\quad{}+(- 14 r^6+ 4 r^8- 14 r^6 \cos2\alpha- 4 r^8 \cos4\alpha)\cos6\alpha -8(4 r^4+1)\sin^22\alpha \\
	&\quad{}+(- 16 r^4- 32 r^6- 4 r^8) \sin2\alpha\sin4\alpha  - 8 r^6\sin^24\alpha(1+r^2) \\
	&\quad{}-8r^6\sin2\alpha\sin6\alpha- 4 r^8 \sin4\alpha\sin6\alpha)\\
	a_2& =32 r^4 \cos^2\alpha \bigl(-1 + 12 r^2 - 20 r^4 + 20 r^6 + r^8 + (192 r^3+ 56 r^5- 40 r^7) \cos\alpha \\
	&\quad{} +(-1+ 20 r^2- 36 r^4+ 34 r^6+ r^8+ 208 r^3 \cos\alpha  + 120 r^5 \cos\alpha - 4 r^7\cos\alpha\\
	&\quad{} + 8 r^2 \cos2\alpha- 16 r^4 \cos2\alpha+ 14 r^6 \cos2\alpha)   \cos2\alpha+ (48 r^3 + 80 r^5 -
	32 r^7 \\
	&\quad{}+ 48 r^3 \cos2\alpha+ 104 r^5\cos2\alpha- 4 r^7 \cos2\alpha)\cos3\alpha  +( - 2 r^4 - 8 r^6- 2 r^8\\
	&\quad{}+ 16 r^3\cos\alpha + 72 r^5 \cos\alpha + 40 r^7 \cos\alpha- 2 r^4 \cos2\alpha- 8 r^6 \cos2\alpha - 2 r^8 \cos2\alpha\\
	&\quad{}+ 24 r^5 \cos3\alpha+ 32 r^7\cos3\alpha)\cos4\alpha  + (24 r^5- 8 r^7 + 24 r^5\cos2\alpha\\
	&\quad{}+ 8 r^7 \cos4\alpha)\cos5\alpha+(- 6 r^6 + 8 r^5 \cos\alpha+ 4 r^7\cos\alpha - 6 r^6 \cos2\alpha\\
	&\quad{}+ 4 r^7 \cos3\alpha ) \cos6\alpha + (32 r^3- 12 r^5+ 28 r^7+ 4 r^9)\sin\alpha \sin2\alpha + (16 r^3+ 32 r^5\\
	&\quad{}+ 20 r^7 ) \sin2\alpha \sin3\alpha  + (8 r^3 + 8 r^5+ 16 r^9 )\sin\alpha \sin4\alpha  + (16 r^5+ 16 r^7\\
	&\quad{}+ 8 r^9) \sin3\alpha \sin4\alpha  + (16 r^5 \sin2\alpha + 8 r^7 \sin4\alpha)\sin5\alpha + (4 r^5 - 4 r^7 \\
	&\quad{}+ 4 r^9)\sin\alpha \sin6\alpha + 4 r^7\sin3\alpha \sin6\alpha\bigr)\\
	a_1& = -32 r^4 \cos^2\alpha \bigl(3 - 37 r^2 - 32 r^6 + 9 r^8 + r^{10} + (48 r^3+ 32 r^5) \cos\alpha   \\
	&\quad{}+(- 96 r^2- 112 r^4 - 32 r^6) \cos^2\alpha    +( 3- 60 r^2- 52 r^4  - 50 r^6+ 7 r^8+ 48 r^3 \cos\alpha\\
	&\quad{}+ 32 r^5 \cos\alpha)\cos2\alpha +( - 24 r^2- 48 r^4- 30 r^6 ) \cos^22\alpha  + (16 r^3+ 24 r^5 \\
	&\quad{}- 32 r^2 \cos\alpha - 112 r^4 \cos\alpha  - 56 r^6 \cos\alpha+ 16 r^3 \cos2\alpha  + 24 r^5 \cos2\alpha )\cos3\alpha     \\
	&\quad{}+(- 16 r^4 - 24 r^6) \cos^23\alpha + (r^2- 30 r^4+ 2 r^6- 6 r^8 - 3 r^{10} - 26 r^4 \cos2\alpha\\
	&\quad{}- 16 r^6 \cos2\alpha- 4 r^8\cos2\alpha) \cos4\alpha  +(- 6 r^6+ 2 r^{10}) \cos^24\alpha  + 8 r^5 \cos5\alpha  \\
	&\quad{}+(- 16 r^4\cos\alpha- 8 r^6\cos\alpha+ 8 r^5 \cos2\alpha- 8 r^6  \cos3\alpha)  \cos5\alpha  +( 6 r^6  - 2 r^8\\
	&\quad{}+ 6 r^6 \cos2\alpha+ 2 r^8 \cos4\alpha)\cos6\alpha +( - 8 r^2+ 8 r^4- 16 r^6+ 8 r^8- 8 r^{10}) \sin^2\alpha \\
	&\quad{}+ (16 r^4 + 4 r^6)\sin^22\alpha  +(- 8 r^2 - 8 r^4 - 8 r^6- 8 r^8-8 r^{10})\sin\alpha\sin3\alpha\\
	&\quad{}+( - 8 r^4- 8 r^6- 8 r^8 ) \sin^23\alpha+ (8 r^4+ 16 r^6+ 2 r^8) \sin2\alpha\sin4\alpha   \\
	&\quad{}+ (4 r^6 + 4 r^8)\sin^24\alpha  +(- 8 r^4+ 8 r^6 - 8 r^8 )\sin\alpha \sin5\alpha   \\
	&\quad{}- 8 r^6 \sin3\alpha \sin5\alpha+ 4 r^6 \sin2\alpha \sin6\alpha + 2 r^8 \sin4\alpha \sin6\alpha\bigr)\\
	a_0&= -32 r^{3}\cos^{2}\alpha \bigl(-r+12r^{3}+4r^{5}+12r^{7}+r^{9}+(2-23r^{2}-20r^{4}-28r^{6}-14r^{8} \\
	&\quad{}-r^{10})\cos\alpha+(-r+20r^{3}+20r^{5}+22r^{7}+r^{9}-16r^{2}\cos\alpha-40r^{4}\cos\alpha \\
	&\quad{}-36r^{6}\cos\alpha-10r^{8}\cos\alpha)\cos2\alpha+(8r^{3}+16r^{5}+10r^{7})\cos^{2}2\alpha+(r^{2}-12r^{4}\\
	&\quad{}-4r^{6}-12r^{8}-r^{10}-8r^{4}\cos2\alpha-16r^{6}\cos2\alpha-10r^{8}\cos2\alpha)\cos3\alpha \\
	&\quad{}+(6r^{5}-2r^{9}-12r^{4}\cos\alpha-6r^{6}\cos\alpha+4r^{8}\cos\alpha+2r^{10}\cos\alpha+6r^{5}\cos2\alpha\\
	&\quad{}-2r^{9}\cos2\alpha-6r^{6}\cos3\alpha+2r^{10}\cos3\alpha)\cos4\alpha +(-2r^{7}+4r^{6}\cos\alpha+2r^{8}\cos\alpha\\
	&
	\quad{}-2r^{7}\cos2\alpha+2r^{8}\cos3\alpha)\cos6\alpha +(8r^{4}+8r^{8})\sin\alpha\sin2\alpha \\
	&\quad{}
	+(8r^{4}+8r^{6}+8r^{8})\sin2\alpha\sin3\alpha+(4r^{6}+4r^{10})\sin\alpha\sin4\alpha\\
	&
	\quad{}+(4r^{6}+4r^{8}+4r^{10})\sin3\alpha\sin4\alpha +8r^{6}\sin2\alpha\sin5\alpha +4r^{8}\sin4\alpha\sin5\alpha\bigr).\\
	b_8& =1024  r^8 \cos^4\alpha (1 + \cos2\alpha)^2\\
	b_7& =-2048  r^8 \cos^4\alpha (1 +\cos2\alpha)^2\\
	b_6& =-2048  r^8 \cos^4\alpha (1 + \cos2\alpha) (3 - r^2 + 3 \cos2\alpha + r^2 \cos4\alpha)\\
	b_5& =2048 r^{7}\cos^{4}\alpha (1+\cos2\alpha)
	(5r-r^{3}-2\cos\alpha-r^{2}\cos\alpha+5r\cos2\alpha\\
	&\quad{}-r^{2}\cos3\alpha+r^{3}\cos4\alpha)\\
	b_4& =1024 r^{7}\cos^{4}\alpha (12r-8r^{3}+r^{5}+(4+2r^{2})\cos\alpha +(24r-8r^{3}+4\cos\alpha\\
	&
	\quad{}+2r^{2}\cos\alpha+12r\cos2\alpha)\cos2\alpha +(2r^{2}+2r^{2}\cos2\alpha)\cos3\alpha +(8r^{3}-2r^{5}\\
	&
	\quad{}+8r^{3}\cos2\alpha+r^{5}\cos4\alpha)\cos4\alpha+4r\sin^{2}2\alpha +(4r^{3}\sin2\alpha+r^{5}\sin4\alpha)\sin4\alpha\bigr)\\
	b_3& =2048\,r^{7}\cos^{4}\alpha \bigl(-7r+2r^{3}+(8+2r^{2}-r^{4})\cos\alpha+(-14r+2r^{3}+8\cos\alpha\\
	&
	\quad{}+4r^{2}\cos\alpha-7r\cos2\alpha)\cos2\alpha +(4r^{2}-r^{4}+4r^{2}\cos2\alpha)\cos3\alpha \\
	&\quad{}+(-2r^{3}+2r^{2}\cos\alpha+r^{4}\cos\alpha-2r^{3}\cos2\alpha+r^{4}\cos3\alpha)\cos4\alpha\\
	&\quad{}+(2-2r^{2}+2r^{4})\sin\alpha\sin2\alpha+2r^{2}\sin2\alpha\sin3\alpha +(r^{2}-r^{4}+r^{6})\sin\alpha\sin4\alpha\\
	&
	\quad{}+r^{4}\sin3\alpha\sin4\alpha\bigr)\\
	b_2& =1024\,r^{6}\cos^{4}\alpha (-6r^{2}+6r^{4}+-r^{6}+(-8r-4r^{3})\cos\alpha+(4+4r^{2}+r^{4})\cos^{2}\alpha\\
	&
	\quad{}+(-12r^{2}+6r^{4}-8r\cos\alpha-4r^{3}\cos\alpha-6r^{2}\cos2\alpha)\cos2\alpha  \\
	&
	\quad{}+(-4r^{3}+4r^{2}\cos\alpha+2r^{4}\cos\alpha-4r^{3}\cos2\alpha+r^{4}\cos3\alpha)\cos3\alpha\\
	&
	\quad{}+(-6r^{4}+2r^{6}-6r^{4}\cos2\alpha-r^{6}\cos4\alpha)\cos4\alpha+(1-2r^{2}+3r^{4}-2r^{6} \\
	&
	\quad{}+r^{8})\sin^{2}\alpha-4r^{2}\sin^{2}2\alpha +(2r^{2}-2r^{4}+2r^{6})\sin\alpha\sin3\alpha\\
	&
	\quad{}+r^{4}\sin^{2}3\alpha-4r^{4}\sin2\alpha\sin4\alpha-r^{6}\sin^{2}4\alpha\bigr)\\
	b_1& =-1024\,r^{7}\cos^{4}\alpha (-6r+2r^{3}+(12+2r^{2}-2r^{4})\cos\alpha+(-12r+2r^{3}+12\cos\alpha\\
	&
	\quad{}+6r^{2}\cos\alpha-6r\cos2\alpha)\cos2\alpha +(6r^{2}-2r^{4}+6r^{2}\cos2\alpha)\cos3\alpha \\
	&
	\quad{}+(-2r^{3}+4r^{2}\cos\alpha+2r^{4}\cos\alpha-2r^{3}\cos2\alpha+2r^{4}\cos3\alpha)\cos4\alpha
	+(2-2r^{2}\\
	&
	\quad{}+2r^{4})\sin\alpha\sin2\alpha
	+2r^{2}\sin2\alpha\sin3\alpha +(r^{2}-r^{4}+r^{6})\sin\alpha\sin4\alpha
	\\
	&
	\quad{}+r^{4}\sin3\alpha\sin4\alpha\bigr)\\
	b_0&=  256\,r^{6}\cos^{4}\alpha\bigl(2r
	+(-4-2r^{2})\cos\alpha
	+2r\cos2\alpha
	-2r^{2}\cos3\alpha \\
	&
	\quad{}+2r\sin2\alpha
	+r^{3}\sin4\alpha\bigr)\bigl(	-2r
	+(4+2r^{2}\cos\alpha
	\cos\alpha \\
	&
	\quad{}-2r\cos2\alpha
	+2r^{2}\cos3\alpha
	+2r\sin2\alpha
	+r^{3}\sin4\alpha\bigr).
	\end{align*}
	The \textit{Sylvester } matrix for \(R_1 \text{ and } G\) is denoted by \(S(R_1,G).\)
	\begin{theorem}Let  $0<|\alpha|<\pi/2$.  The radius of lemniscate starlikeness for the class
	$\mathcal{SP}_\alpha$  is the smallest admissible root in $(0,1)$ of  \(T_L(r,\alpha)=\det S(R_1 ,G )=0,\)  where
	$R_1, G$ are given by \eqref{p1lempoly}.		
	\end{theorem}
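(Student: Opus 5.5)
Since the values of \(zf'(z)/f(z)\) for \(f\in\mathcal{SP}_\alpha\) and \(|z|\le r\) fill exactly the closed disk \eqref{p1reqdisk}, and \(f_0\) attains its boundary, I will find the largest \(r\) for which this disk lies in \(\Omega_L\). Sharpness then follows from \(f_0\). As in the parabolic case, I write \(w=x+iy\). The boundary \(|w^2-1|=1\) becomes the quartic
\[
\Phi_L(x,y):=(x^2+y^2)^2-2(x^2-y^2)=0,
\]
and \(\Omega_L\) is the part of \(\{\Phi_L<0\}\) containing \(w=1\) (the right lobe). I set \(F(r,t)=\Phi_L(\gamma(t))\) with \(\gamma\) from \eqref{p1gammaeqn}. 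For small \(r\) the disk shrinks to the point \(1\), which lies in \(\Omega_L\), so \(F<0\) for all \(t\). The first \(r\) at which \(\max_t F(r,t)=0\) is therefore the radius, and at that \(r\) the circle is tangent to the lemniscate. This gives the system \(F(r,t_0)=0\), \(F_t(r,t_0)=0\).

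Next I substitute \(x=u+Rc\), \(y=v+Rs\) and clear denominators by multiplying by a power of \((1-r^2)\). After reducing with \(s^2=1-c^2\), each of \(F\) and \(F_t\) becomes linear in \(s\), of the form \(g_0+g_1s\) and \(h_0+h_1s\), with \(g_i,h_i\) polynomial in \(c\) and in \(r,\cos\alpha,\sin\alpha\). The higher powers of \(s\) reduce because only \(s^2\) needs replacing. Eliminating \(s\) by \(\operatorname{Res}_s=g_1h_0-g_0h_1\) produces a polynomial in \(c\); after the common factors are stripped it should be the degree-\(5\) polynomial \(R_1(c)\) of \eqref{p1lempoly}. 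Solving the linear equation for \(s=-h_0/h_1\) and imposing \(s^2+c^2=1\) gives the degree-\(8\) polynomial \(G(c)=h_0^2+(c^2-1)h_1^2\). Any tangency point must be a common root of \(R_1\) and \(G\), so \(T_L(r,\alpha)=\det S(R_1,G)\) vanishes at the extremal radius.

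The degenerate case \(h_1=0\) must be handled separately, exactly as the vanishing denominator was handled in the parabolic theorem. There I would solve \(h_0=0\) together with \(g_0=0\) directly and check that the resulting \(r\) is not below the candidate radius, or else that the candidate is a genuine tangency anyway. I must also discard the extraneous roots of \(T_L\), and this is where I expect the main obstacle. These include roots coming from the stripped factors, from complex \(c\) or \(|c|>1\), from tangency to the left lobe or at the origin node, and from internal tangencies that occur after the disk has already left \(\Omega_L\). For each real root \(r^*\in(0,1)\) in increasing order, I would recover \(c\) as the common root of \(R_1\) and \(G\) (via their gcd), compute \(s\), and test that \(|c|\le1\), that \((x,y)\) lies on the right lobe, and that \(F(r^*,t)\le0\) for all \(t\). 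The first root passing these tests is the admissible one, and it equals the radius.

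A rigorous version would argue continuity and monotonicity: \(\max_t F(r,t)\) is continuous in \(r\), negative near \(0\), and positive near \(1\), since the disk escapes the bounded lemniscate. So a first zero exists, and it is a tangency root of \(T_L\); no admissible root can precede it. The last step is to reconcile with \(\alpha=0\), where the radius is \(1-1/\sqrt2\), as a consistency check.
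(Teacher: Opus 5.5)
Your proposal follows the paper's proof essentially step for step: the tangency system $F=F_t=0$ for $\Phi=(x^2+y^2)^2-2(x^2-y^2)$, reduction to linearity in $s$ via $s^2=1-c^2$, the degree-$5$ resultant $R_1=g_1h_0-g_0h_1$, the degree-$8$ polynomial $G$ (your $h_0^2+(c^2-1)h_1^2$ up to a nonzero factor), and the $13\times 13$ Sylvester determinant. Your continuity argument and explicit admissibility test make precise what the paper leaves informal.

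There are two small corrections. First, your $\alpha=0$ benchmark is wrong: it should be $3-2\sqrt2\approx 0.1716$, coming from $(1+r)/(1-r)=\sqrt2$ as in the paper's corollary, whereas $1-1/\sqrt2$ is the corresponding value for starlike functions of order $1/2$. Second, in the degenerate case $h_1=0$ you should pair $h_0=0$ with $F=g_0+g_1s=0$ rather than with $g_0=0$. That case in fact needs no separate treatment for showing that the radius is a root of $T_L$, since $g_1h_0-g_0h_1$ and $h_0^2-(1-c^2)h_1^2$ vanish at every solution of the original system regardless of $h_1$.
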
  %The radius of lemniscate starlikeness for the class
	%		$\mathcal{SP}_\alpha$  is the smallest root in $(0,1)$ of
	%		\begin{align}\label{redeq2}
	%			T_L(r,\alpha)=\left|
	%			\begin{array}{ccccccccccccc}
		%				a_5&a_4&a_3&a_2&a_1&a_0&0&0&0&0&0&0&0\\
		%				0&a_5&a_4&a_3&a_2&a_1&a_0&0&0&0&0&0&0\\
		%				0&0&a_5&a_4&a_3&a_2&a_1&a_0&0&0&0&0&0\\
		%				0&0&0&a_5&a_4&a_3&a_2&a_1&a_0&0&0&0&0\\
		%				0&0&0&0&a_5&a_4&a_3&a_2&a_1&a_0&0&0&0\\
		%				0&0&0&0&0&a_5&a_4&a_3&a_2&a_1&a_0&0&0\\
		%				0&0&0&0&0&0&a_5&a_4&a_3&a_2&a_1&a_0&0\\
		%				0&0&0&0&0&0&0&a_5&a_4&a_3&a_2&a_1&a_0\\
		%				b_8&b_7&b_6&b_5&b_4&b_3&b_2&b_1&b_0&0&0&0&0\\
		%				0&b_8&b_7&b_6&b_5&b_4&b_3&b_2&b_1&b_0&0&0&0\\
		%				0&0&b_8&b_7&b_6&b_5&b_4&b_3&b_2&b_1&b_0&0&0\\
		%				0&0&0&b_8&b_7&b_6&b_5&b_4&b_3&b_2&b_1&b_0&0\\
		%				0&0&0&0&b_8&b_7&b_6&b_5&b_4&b_3&b_2&b_1&b_0
		%			\end{array}
	%			\right|=0
	%		\end{align}

	\begin{proof}
	Let \( \Omega_{L}=\{ w\in\mathbb{C}: |w^2-1|<1 \}\). Then the boundary of $\Omega_{L}$ is given by \[ |w^2 -1| =1. \]
	Let \(w=x+iy,\) then the boundary of  $\Omega_{L}$ will become
	\( |x^2-y^2+2xyi-1|=1\), which produces
	\[ \Phi (x,y):= \left( x^2-y^2-1\right) ^2+4x^2y^2-1=0.
	\]
	We define the function $F: (0,1) \times [0,2\pi] \to \mathbb{R}$ along
	the curve $\gamma(t) $ in \eqref{p1gammaeqn} as
	\begin{equation} \label{p1fdefnlem}
		F(r,t) := \Phi(\gamma(t)) = \left( x^2(r,t)-y^2(r,t)-1\right) ^2+4x^2(r,t)y^2(r,t)-1
	\end{equation}
	The sign of \(F(r,t)\) determines the position of the point \(\gamma(t)\) relative to the region \(\Omega_{L}\). More precisely, the disk determined by \(\gamma(t)\) is contained in \(\Omega_L\) when
	\[
	F(r,t)\le 0
	\qquad \text{for all } t.
	\]
	The extremal situation occurs when equality is attained for at least one value of \(t\).
	We want the disk \eqref{p1reqdisk} to lie inside $\Omega_{L}$, therefore we have to prove the tangency condition for the boundary curves $\Phi $ and $\partial\Omega_{L}.$

	\textbf{Tangency condition: } At the extremal radius, the boundary circle is tangent to the lemniscate defining \(\partial\Omega_{L}\). As a result, there exists \(t=t_0\) such that
	\begin{align} \label{p1cond1lem}
		F(r,t)&=0,\\ \label{p1cond2lem}
		\frac{\partial F(r,t)}{\partial t}&=0.
	\end{align}
	Evaluating \eqref{p1cond1lem} and \eqref{p1cond2lem} using \eqref{p1fdefnlem}, \eqref{p1gammaeqn} and the identity \(s^2=1-c^2\),  we obtain
	\begin{align} \label{p1eqn1lem}
		F(r,t)& =2 R^2 - 4 c^2 R^2 + R^4 - 4 c R u + 4 c R^3 u - 2 u^2 + 2 R^2 u^2  \\  \notag
		&\quad {}+
		4 c^2 R^2 u^2 + 4 c R u^3 + u^4 + 2 v^2 + 6 R^2 v^2 -
		4 c^2 R^2 v^2 + 4 c R u v^2  \\ \notag
		&\quad {}+ 2 u^2 v^2 + v^4+ 4 R s v (1 + R^2 + 2 c R u + u^2 + v^2)=0 \\
		\label{p1eqn2lem}
		F_t(r,t)& =4R\bigl((v (c + c R^2 - 2 R u + 4 c^2 R u + c u^2 + c v^2)\\ \notag
		&\quad {} +
		s (2 c R + R^3 - c R^3 - c^2 R^3 + c^3 R^3 + u - R^2 u -
		2 c R u^2 - u^3 \\
		&\quad{}+ 2 c R v^2 - u v^2)\bigr)=0 \notag
	\end{align}
	From \(F_t(r,t)\), on computing $s$ we have
	\[s=\frac{-v (c + c R^2 - 2 R u + 4 c^2 R u + c u^2 + c v^2)}{(2 c R + R^3 - c R^3 - c^2 R^3 + c^3 R^3 + u - R^2 u -
		2 c R u^2 - u^3 + 2 c R v^2 - u v^2)}.\]
	If the denominator vanishes, then it follows from \eqref{p1eqn2lem} that
	\[
	F_2=4R v (c + c R^2 - 2 R u + 4 c^2 R u + c u^2 + c v^2)=0.
	\]
	Solving this equation together with \(F_1=0\) yields solutions satisfying \(r>	R_{\mathcal S_{L}}(\mathcal{SP}_{\alpha})\). Since \(	R_{\mathcal S_{L}}(\mathcal{SP}_{\alpha})\) is the smallest radius at which tangency occurs, every such solution corresponds to a tangency that takes place after the disk has already ceased to be contained in \(\Omega_{L}\). Consequently, the case
	\(
	(2 c R + R^3 - c R^3 - c^2 R^3 + c^3 R^3 + u - R^2 u -
	2 c R u^2 - u^3 + 2 c R v^2 - u v^2)=0
	\)
	may be excluded from further consideration. Substituting the resulting expression for \(s\) into the identity \(s^2 = 1 - c^2\), we arrive at
	\begin{align} \label{p1geqnlem}
		G(c):& =\, -32 c^7 R^8 + 16 c^8 R^8 - 32 c^6 R^6 (-2 + R^2 + 2 u^2 - 2 v^2)\\ \notag
		& \quad{}- 16 R^2 (R^3 + u - R^2 u - u^3 - 2 R u v - u v^2) (R^3 + u - R^2 u \\ \notag
		&\quad{}-
		u^3 + 2 R u v - u v^2) +
		32 c^5 R^5 (-2 R + 3 R^3 + u - R^2 u + 2 R u^2 \\ \notag
		&\quad{}- u^3 - 2 R v^2 -
		u v^2) +
		32 c R^3 (-2 R^3 + R^5 - 2 u + 3 R^2 u - R^4 u \\ \notag
		&\quad{}+ 2 R^3 u^2 + 4 u^3 -
		3 R^2 u^3 - 2 u^5 - 2 R^3 v^2 - 2 u v^2 - R^2 u v^2 -
		2 u^3 v^2) \\ \notag
		&\quad{}+
		32 c^4 R^4 (2 - 4 R^2 - R u + R^3 u - 4 u^2 + 4 R^2 u^2 + R u^3 +
		2 u^4 + 4 v^2 \\ \notag
		&\quad{}- 4 R^2 v^2 + R u v^2 + 4 u^2 v^2 + 2 v^4) -
		32 c^3 R^3 (-4 R^3 + 3 R^5 - 2 u + 4 R^2 u \\ \notag
		&\quad{}- 2 R^4 u + 4 R^3 u^2 +
		4 u^3 - 4 R^2 u^3 - 2 u^5 - 4 R^3 v^2 - 4 u v^2 - 4 R^2 u v^2 \\ \notag
		&\quad{}-
		4 u^3 v^2 - 2 u v^4) +
		16 c^2 R^2 (-4 R^2 + 4 R^4 + 2 R^6 + 4 R^3 u - 4 R^5 u + u^2 \\ \notag
		&\quad{}+
		6 R^2 u^2 - 3 R^4 u^2 - 4 R^3 u^3 - 2 u^4 - 2 R^2 u^4 + u^6 +
		v^2 - 6 R^2 v^2 \\ \notag
		&\quad{}+ 5 R^4 v^2 - 4 R^3 u v^2 - 4 R^2 u^2 v^2 +
		3 u^4 v^2 + 2 v^4 - 2 R^2 v^4 + 3 u^2 v^4 + v^6)
	\end{align}
	\textbf{Resultant computation:}  For eliminating $s$ in equations \eqref{p1eqn1lem} and \eqref{p1eqn2lem}, we compute the resultant considering them as a linear equation in $s$,   we write
	\[
	F=g_0+g_1s,
	\qquad
	F_t=h_0+h_1s.
	\]
	where
	\begin{align*} g_0& =2 R^2 - 4 c^2 R^2 + R^4 - 4 c R u + 4 c R^3 u - 2 u^2 + 2 R^2 u^2 +
		4 c^2 R^2 u^2 + 4 c R u^3 \\
		&\quad{}+ u^4 + 2 v^2 + 6 R^2 v^2 -
		4 c^2 R^2 v^2 + 4 c R u v^2 + 2 u^2 v^2 + v^4, 	\\
		g_1& =  4 R v (1 + R^2 + 2 c R u + u^2 + v^2)\\
		h_0& = v (c + c R^2 - 2 R u + 4 c^2 R u + c u^2 + c v^2)\\
		h_1& = (2 c R + R^3 - c R^3 - c^2 R^3 + c^3 R^3 + u - R^2 u -
		2 c R u^2 - u^3 + 2 c R v^2 - u v^2)
	\end{align*}	
	The equations \(F=0\) and \(F_t=0\) admit a common solution in \(s\) if and only if their resultant with respect to \(s\) vanishes. Then the resultant is given by
	\[
	\operatorname{Res}_s(F,F_t)
	=
	\begin{vmatrix}
		g_1 & g_0\\
		h_1 & h_0
	\end{vmatrix}
	=
	g_1h_0-g_0h_1=0.
	\]
	Substituting the corresponding coefficients \(g_0,g_1,h_0,\) and \(h_1\), we obtain
	\[\begin{aligned}
		Res_s(F,F_t)& =-16 c^5 R^6 (-1 + u^2 - v^2) -
		16 c^4 R^5 (R - u + R^2 u - R u^2 + u^3 \\
		&\quad{}+ R v^2 + u v^2) -
		4 c^3 R^4 (-8 + 6 R^2 + R^4 + 4 R u - 4 R^3 u + 14 u^2 \\
		&\quad{}- 2 R^2 u^2 -
		4 R u^3 - 7 u^4 - 14 v^2 + 10 R^2 v^2 - 4 R u v^2 - 14 u^2 v^2 \\
		&\quad{}-
		7 v^4) +
		4 c^2 R^3 (6 R^3 + R^5 + 12 u - 16 R^2 u + 4 R^4 u - 2 R u^2 \\
		&\quad{}-
		2 R^3 u^2 - 24 u^3 + 16 R^2 u^3 + R u^4 + 12 u^5 + 2 R v^2 +
		10 R^3 v^2 \\
		&\quad{}+ 24 u v^2 + 16 R^2 u v^2 + 2 R u^2 v^2 + 24 u^3 v^2 +
		R v^4 + 12 u v^4) \\
		&\quad{}+
		4 c R^2 (-4 R^2 + R^6 + 4 R^3 u - 4 R^5 u + 8 u^2 - 10 R^2 u^2 +
		8 R^4 u^2 \\
		&\quad{}- 4 R^3 u^3 - 14 u^4 + 13 R^2 u^4 + 6 u^6 - 6 R^2 v^2 +
		8 R^4 v^2 - 4 R^3 u v^2 \\
		&\quad{}+ 4 u^2 v^2 + 10 R^2 u^2 v^2 +
		14 u^4 v^2 + 2 v^4 - 3 R^2 v^4 + 10 u^2 v^4 + 2 v^6) \\
		&\quad{}-
		4 R (2 R^5 + R^7 + 2 R^2 u - R^4 u - R^6 u - 2 R^3 u^2 + 2 R^5 u^2 -
		2 u^3 \\
		&\quad{}+ 2 R^2 u^3 - 3 R^4 u^3 + R^3 u^4 + 3 u^5 - 3 R^2 u^5 -
		u^7 + 2 R^3 v^2 + 6 R^5 v^2 \\
		&\quad{}+ 2 u v^2 + 10 R^2 u v^2 +
		R^4 u v^2 + 2 R^3 u^2 v^2 + 2 u^3 v^2 - 2 R^2 u^3 v^2 \\
		&\quad{}-
		3 u^5 v^2 + R^3 v^4 - u v^4 + R^2 u v^4 - 3 u^3 v^4 - u v^6)=0.
	\end{aligned}\]
	\(Res_s(F,F_t)\) is a polynomial in \(c\) of degree 5 and from equation \eqref{p1geqnlem} we can see that \(G(c)\) is a polynomial in \(c\) of degree 8.  Substituting the values of \(u,v,R\), canceling the denominator \((-1 + r)^4(1 + r)^4\) and simplifying we get the following polynomials in \(c\)
	\begin{align*}G(c)&=b_8c^8+b_7c^7+b_6c^6+b_5c^5+b_4c^4+b_3c^3+b_2c^2+b_1c+b_0\\
		Res_s(F,F_t)&=a_5c^5+a_4c^4+a_3c^3+a_2c^2+a_1c+a_0=R_1(c)\end{align*}
	where the \(a_i's \) and \(b_i's\) are given in \eqref{p1lempoly}. The two equations
	\(Res_s(F,F_t)=0\) and \(G(c)=0\) have a common solution in \(c\) if and only if their resultant \(T_L(r,\alpha)= Res_c(R_1(c),G(c))\) with respect to \(c\) vanishes.  The resultant with respect to \(c\) for these two polynomial will be the determinant of the corresponding \textit{Sylvester} matrix which is of order 13, whose entries are given by equation \eqref{p1lempoly}.  The least admissible zero of this polynomial, \(	R_{\mathcal S_{L}}(\mathcal{SP}_{\alpha})\) lying in \((0,1)\) is the radius of lemniscate starlikness for the class \(\mathcal{SP_\alpha}\).
	\end{proof}
	
	In the following results, the lemniscate starlikeness radius for the class  \(\mathcal{SP_\alpha}\) are given for some particular values of alpha.
	
	\begin{corollary}
	Let $ 	R_{\mathcal S_{L}}(\mathcal{SP}_{\alpha})$ be the radius of lemniscate starlikeness for the class
	$\mathcal{SP}_\alpha$. Then      $ 	R_{\mathcal S_{L}}(\mathcal{SP}_{\pi/3})=0.3029058506102136$,   $ 	R_{\mathcal S_{L}}(\mathcal{SP}_{\pi/6})=0.19459383342676778$ and \\ $ 	R_{\mathcal S_{L}}(\mathcal{SP}_{\pi/4})=0.2308777657345702$.
	These radii are sharp.
	\end{corollary}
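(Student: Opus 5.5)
We specialize the preceding Theorem (lemniscate starlikeness of $\mathcal{SP}_\alpha$) to the three values $\alpha=\pi/3,\pi/6,\pi/4$, exactly as in the parabolic corollary. For each $\alpha$ we first evaluate the center and radius of the variability disk \eqref{p1reqdisk}. These are $u(r)=(1+r^2\cos2\alpha)/(1-r^2)$, $v(r)=-r^2\sin2\alpha/(1-r^2)$ and $R(r)=2r\cos\alpha/(1-r^2)$. We substitute them into the coefficients $a_i,b_i$ of \eqref{p1lempoly}, so that $R_1(c)$ and $G(c)$ become polynomials in $c$ whose coefficients are explicit polynomials in $r$ with algebraic constants ($\sqrt3$ for $\pi/3,\pi/6$, and $\sqrt2$ for $\pi/4$). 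We then form the order-$13$ Sylvester determinant $T_L(r,\alpha)=\det S(R_1,G)$ by computer algebra. From it we strip the obviously nonvanishing factors: powers of $r$, powers of $(1-r)(1+r)$, and powers of $\cos\alpha,\sin\alpha$. This leaves a reduced univariate polynomial in $r$ (in fact in $r^2$ up to such factors), and we factor it over $\mathbb{Q}(\sqrt3)$ or $\mathbb{Q}(\sqrt2)$ as appropriate.

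Next we isolate the real roots of the reduced polynomial in $(0,1)$ numerically and test them in increasing order for admissibility. For a candidate $r_0$ we solve $R_1(c)=0$ and $G(c)=0$ for a common root $c_0\in[-1,1]$. We recover $s_0$ from the expression for $s$ obtained from $F_t=0$, and we check three things: that $s_0^2+c_0^2=1$, that $F(r_0,t_0)=0$, and that the point $\gamma(t_0)$ actually lies on the lemniscate $|w^2-1|=1$. The last check is needed because $\Phi=0$ is the correct locus, but extraneous roots come from the elimination. We also confirm that $F(r,t)\le0$ for all $t$ when $r<r_0$, by sampling $\max_t F(r,t)$ slightly below and above $r_0$. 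This shows that $r_0$ is the first tangency and not a later one where the disk is already outside $\Omega_L$. The smallest admissible root gives the stated values $0.30290\ldots$, $0.19459\ldots$ and $0.23087\ldots$.

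Sharpness follows from the extremal function $f_0$ with $zf_0'/f_0=e^{-i\alpha}(\cos\alpha\,(1+z)/(1-z)+i\sin\alpha)$. It maps $|z|\le r$ onto the full disk \eqref{p1reqdisk}, so at $z_0=r_0e^{i\theta_0}$ corresponding to $t_0$ the value $z_0f_0'(z_0)/f_0(z_0)$ lies on $\partial\Omega_L$. For $r$ slightly larger than $r_0$, the tangency shows the disk then contains points outside $\Omega_L$.

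The main obstacle is the admissibility screening. The degree of $T_L$ is large and it has many spurious roots in $(0,1)$, coming from complex $c$, from $|c|>1$, and from the excluded case where the denominator of $s$ vanishes. The fourfold lemniscate symmetry also produces tangencies near the waist $w=0$ rather than near the right lobe. I would handle this by first plotting $\max_t F(r,t)$ against $r$ to locate the true crossing. I would then match that crossing with the nearest root of the reduced resultant, so the resultant serves as a high-precision certificate for a value already pinned down geometrically.
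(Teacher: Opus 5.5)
Your plan is the paper's own proof with extra safeguards, but it inherits a genuine error that those safeguards cannot detect. The derivative $F_t$ used to build $h_1$, $R_1$, $G$ and the order-$13$ determinant in \eqref{p1lempoly} is wrong. Differentiating $\Phi(\gamma(t))$ directly gives $F_t=4R\bigl[(x^2+y^2)(vc-us)+us+vc+2Rcs\bigr]$. After using $s^2=1-c^2$, the coefficient of $s$ is therefore $h_1=2cR+u-R^2u-2cRu^2-u^3+2cRv^2-uv^2$, and the extra terms $R^3(1-c-c^2+c^3)$ in the paper's $h_1$ are spurious. The formula is meant to be an identity in $u,v,R$, so any test point will do. For instance, at $u=1$, $v=\tfrac12$, $R=1$, $t=\pi/2$ one has $\Phi_x=9$, $x_t=-1$, $y_t=0$, so $F_t=-9$, whereas the displayed formula gives $-5$.

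With the correct $h_1$, $R_1$ is cubic and $G$ quartic in $c$, so the Sylvester matrix is $7\times7$, exactly as in the lune case. The system is invariant under $(r,c,s)\mapsto(-r,-c,-s)$, so the resultant is even in $r$ up to powers of $r$. Your remark ``in fact in $r^2$'' is thus right for the correct system. It is false for the paper's $T_L$, whose displayed factors contain odd powers of $r$; that mismatch is the symptom you would have hit.

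Tangency is a stationary condition in $t$, so the spurious term moves the root only at second order, which is why the flawed roots look plausible. They are nevertheless not the radii. At a solution $(r_0,t_0)$ of the flawed system, $F(r_0,t_0)=0$ while the true derivative is $F_t(r_0,t_0)=-4R^4s_0^3(1-c_0)\neq0$, since $s_0\neq0$ and $c_0\neq1$ there. Hence $t_0$ is not a critical point, $\max_tF(r_0,t)>0$, and the disk already leaves $\Omega_L$ at the reported radius. A direct check gives true radii smaller than the stated ones by about $4\times10^{-7}$ for $\alpha=\pi/3$ and $1.5\times10^{-8}$ for $\alpha=\pi/4$, and by much less for $\pi/6$. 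So the sixteen-digit values cannot be certified this way.

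Your admissibility tests do not help. The conditions $s_0^2+c_0^2=1$, $F(r_0,t_0)=0$ and $\gamma(t_0)\in\partial\Omega_L$ (the last is the same as $F=0$) hold automatically for every solution of the flawed system. Coarse sampling of $\max_tF$ cannot see a shift of this size, so ``matching the crossing with the nearest root of the resultant'' would certify wrong digits.

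What is missing is an independent derivation of $F_t$, followed by the $7\times7$ elimination, and then a high-precision sign test of $\max_tF(r,t)$ at the candidate root. A clean independent check is available: since $u^2+v^2-R^2=1$, the disk is invariant under $w\mapsto1/\bar w$. This map sends $\Omega_L$ onto $\{\operatorname{Re}(w^2)>\tfrac12\}$, so containment is equivalent to $(u+R\cos t)^2-(v+R\sin t)^2\ge\tfrac12$ for all $t$. Your sharpness paragraph is fine once the correct root is used.
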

	\begin{proof}
	For \(\alpha=\pi/3\), the final resultant equation
	\begin{align*}
		T_L\left(r,\frac{\pi}{3}\right)& =28991029248 \,r^{64}(16 + 24 r + 112 r^2 - 96 r^3 - 316 r^4 + 24 r^5 +
		536 r^6 \\
		&\quad{}+ 210 r^7 - 586 r^8 - 318 r^9 + 359 r^{10} + 174 r^{11} -
		116 r^{12} - 36 r^{13} \\
		&\quad{}+ 16 r^{14})^2 (1 - 64 r^2 - 16 r^3 + 1880 r^4 + 904 r^5 - 31960 r^6 - 20752 r^7  \\
		&\quad{}+
		334444 r^8+ 238648 r^9 - 2030008 r^{10} - 1128504 r^{11} +
		4102948 r^{12}  \\
		&\quad{}- 4795496 r^{13}+ 39065240 r^{14} + 105663248 r^{15} -
		409154834 r^{16}  \\
		&\quad{}- 735734144 r^{17}+ 1927302944 r^{18} +
		2865535784 r^{19} - 4755979828 r^{20} \\
		&\quad{}- 5531481392 r^{21} -
		1041874128 r^{22} - 6649989016 r^{23} \\
		&\quad{}+ 67151149010 r^{24} +
		94011349768 r^{25} - 349292098376 r^{26} \\
		&\quad{}- 399632973720 r^{27} +
		1165510085476 r^{28} + 1143299362928 r^{29} \\
		&\quad{}- 2946243593360 r^{30} -
		2505764783472 r^{31} + 5972623978205 r^{32} \\
		&\quad{}+ 4419128143992 r^{33} -
		9977066382200 r^{34} - 6428543371744 r^{35} \\
		&\quad{}+ 13940876971360 r^{36} +
		7825316192592 r^{37} - 16434720568112 r^{38} \\
		&\quad{}- 8040973315448 r^{39} +
		16421302072718 r^{40} + 7010633287712 r^{41} \\
		&\quad{}- 13929439011648 r^{42} -
		5197931424392 r^{43} + 10022506126364 r^{44} \\
		&\quad{}+ 3275987034344 r^{45} -
		6097558112152 r^{46} - 1749161262968 r^{47} \\
		&\quad{}+ 3118991463985 r^{48} +
		785756126384 r^{49} - 1329924471760 r^{50} \\
		&\quad{}- 293572096352 r^{51} +
		466929488512 r^{52} + 89618720832 r^{53} \\
		&\quad{}- 132649278016 r^{54} -
		21763809920 r^{55} + 29733666976 r^{56 }\\
		&\quad{}+ 4036919552 r^{57} -
		5063035648 r^{58} - 535680512 r^{59} + 615747584 r^{60} \\
		&\quad{}+ 45126656 r^{61} -
		47672320 r^{62} - 1806336 r^{63} + 1766656 r^{64})=0.
	\end{align*}
	Hence, the admissible radius \(	R_{\mathcal S_{L}}(\mathcal{SP}_{\pi/3})\) is determined by the smallest root of \(T_L\left(r,\pi/3\right)\) belonging to the interval \((0,1)\). A numerical computation yields
	\[
	R_{\mathcal S_{L}}(\mathcal{SP}_{\pi/3})=0.3029058506102136.\]
	
	Upon setting \(\alpha=\pi/6\), we obtain the following equation corresponding to the second resultant:
	\begin{align*}
		T_L\left(r, \frac{\pi}{6}\right)& =73691306573577785966592( 256 + 768 \sqrt{3} r - 13120 r^2 - 83712 \sqrt{3} r^3 \\
		&\quad{}- 621696 r^4 +
		2056896 \sqrt{3} r^5 + 52365312 r^6 + 70190400 \sqrt{3} r^7  \\
		&\quad{}-
		838288464 r^8 - 4209523104 \sqrt{3} r^9- 12862248960 r^{10}\\
		&\quad{} +
		72371365920 \sqrt{3} r^{11} + 682638816144 r^{12} -
		271110633072 \sqrt{3} r^{13}\\
		&\quad{} - 11730422241900 r^{14} -
		8811135576984 \sqrt{3} r^{15} + 85981601149404 r^{16}  \\
		&\quad{}+
		143822027223396 \sqrt{3} r^{17}- 399691348650176 r^{18}  \\
		&\quad{}-
		1348105573503300 \sqrt{3} r^{19}+ 2431195720774457 r^{20}  \\
		&\quad{}+
		10948023510786228 \sqrt{3} r^{21}- 17442025683355932 r^{22}  \\
		&\quad{}-
		71718616046068488 \sqrt{3} r^{23}+ 101142080150082696 r^{24} \\
		&\quad{} +
		355993640160859320 \sqrt{3} r^{25}- 452739146326598568 r^{26}  \\
		&\quad{}-
		1345962125949079200 \sqrt{3} r^{27}+ 1618479913080232092 r^{28}  \\
		&\quad{}+
		3964959050114141496 \sqrt{3} r^{29}- 4730747178766217664 r^{30} \\
		&\quad{}-
		9273707902711850952 \sqrt{3} r^{31} + 11390162645917708812 r^{32}  \\
		&\quad{}+
		17412804386427268128 \sqrt{3} r^{33}- 22462656802607200056 r^{34}  \\
		&\quad{}-
		26294455286153100648 \sqrt{3} r^{35}+ 35687980921372761454 r^{36}  \\
		&\quad{}+
		31658157909161846280 \sqrt{3} r^{37}- 44109380393117233024 r^{38}  \\
		&\quad{}-
		29694266066518809336 \sqrt{3} r^{39}+ 38954981495615109300 r^{40} \\
		&\quad{}+
		20698483340291180040 \sqrt{3} r^{41 }- 17413503911924906160 r^{42} \\
		&\quad{}-
		9728695344368215728 \sqrt{3} r^{43} - 11422208336219975646 r^{44}  \\
		&\quad{}+
		2410117390876788720 \sqrt{3} r^{45}+ 31086728645650872372 r^{46}  \\
		&\quad{}+
		93817862039551392 \sqrt{3} r^{47}- 31064175530648228304 r^{48} \\
		&\quad{} -
		907270828642920492 \sqrt{3} r^{49}+ 15635448581393593440 r^{50} \\
		&\quad{}+
		2472616315377403740 \sqrt{3} r^{51} + 1352818005013233909 r^{52}  \\
		&\quad{}-
		3652128834805814436 \sqrt{3} r^{53}- 9156119374776746668 r^{54}  \\
		&\quad{}+
		2619831506153199288 \sqrt{3} r^{55}+ 7250506627534705816 r^{56}  \\
		&\quad{}-
		222273670048047960 \sqrt{3} r^{57}- 1919226989378269968 r^{58}  \\
		&\quad{}-
		1197600067126849248 \sqrt{3} r^{59}- 1249849289976346530 r^{60}  \\
		&\quad{}+
		981905304347140056 \sqrt{3} r^{61}+ 1398524756112008268 r^{62}  \\
		&\quad{}-
		228619297077730224 \sqrt{3} r^{63}- 480701380921696968 r^{64}  \\
		&\quad{}-
		138651780359976228 \sqrt{3} r^{65}- 30464541699869496 r^{66}  \\
		&\quad{}+
		145434648985833300\sqrt{3} r^{67}+ 113836321292119785 r^{68}  \\
		&\quad{}-
		48425175732259644 \sqrt{3} r^{69}- 51942175490751612 r^{70}  \\
		&\quad{}-
		2201456572305912 \sqrt{3} r^{71}+ 8563648118607680 r^{72}  \\
		&\quad{}+
		9721579292373792 \sqrt{3} r^{73}+ 3164681852291056 r^{74}  \\
		&\quad{}-
		4569742295944320 \sqrt{3} r^{75}- 2403069304222560 r^{76}  \\
		&\quad{}+
		894267127801728 \sqrt{3} r^{77}+ 731559637830528 r^{78} \\
		&\quad{}+
		144461681750784 \sqrt{3} r^{79 }- 143779738555392 r^{80} \\
		&\quad{}-
		164028775056384 \sqrt{3} r^{81 }+ 24738693388800 r^{82 }  \\
		&\quad{}+
		57845912583168 \sqrt{3} r^{83}- 4707068621568 r^{84} \\
		&\quad{}-
		12000793721856 \sqrt{3} r^{85} + 515951576064 r^{86} +
		1500286912512 \sqrt{3} r^{87}  \\
		&\quad{}+ 28611723264 r^{88}-
		98860769280 \sqrt{3}r^{89} - 10276605952 r^{90} \\
		&\quad{}+
		2497609728 \sqrt{3} r^{91}+ 452263936 r^{92})\,r^{64}=0.
	\end{align*}
	Consequently, the admissible radius \(	R_{\mathcal S_{L}}(\mathcal{SP}_{\pi/6})\) is characterized as the smallest zero of \(T_L\left(r,\pi/6\right)\) in the interval \((0,1)\). Numerical value is given by
	\[
	R_{\mathcal S_{L}}(\mathcal{SP}_{\pi/6})=0.19459383342676778.
	\]

	\begin{figure}[!htbp]
		\centering
		\begin{minipage}{0.48\textwidth}
			\centering
			\begin{tikzpicture}
				\begin{axis}
					[
					width=\linewidth,
					axis line style={thick},
					axis lines=middle,
					xmin=-0.5, xmax=1.7,
					ymin=-0.7, ymax=0.7,
					axis equal image,
					samples=200,
					xtick=\empty,
					ytick=\empty,
					xlabel={$x$},
					ylabel={$y$},
					label style={font=\footnotesize},
					]
					
					% Right loop of the lemniscate
					\addplot[black, thick, samples=200, domain=-45:45]
					({sqrt(2*cos(2*x))*cos(x)},
					{sqrt(2*cos(2*x))*sin(x)})
					-- (axis cs:0,0) -- cycle;
					
					% Circle
					\addplot[black, thick, domain=0:360, samples=300]
					({1.0505104055944359 + 0.33350564535271254*cos(x)},
					{-0.08748658880047408 + 0.33350564535271254*sin(x)});
					
					% Centre
					\addplot[only marks, mark=*, mark size=1.5pt]
					coordinates {(1.0505104055944359,-0.08748658880047408)};
					
					\node[below, font=\footnotesize]
					at (axis cs:1.0505104055944359,-0.08748658880047408) {$u+iv$};
				\end{axis}
			\end{tikzpicture}
		\end{minipage}\hfill
		\begin{minipage}{0.48\textwidth}
			\centering
			\begin{tikzpicture}
				\begin{axis}
					[
					width=\linewidth,
					axis line style={thick},
					axis lines=middle,
					xmin=-0.5, xmax=1.7,
					ymin=-0.7, ymax=0.7,
					axis equal image,
					samples=200,
					xtick=\empty,
					ytick=\empty,
					xlabel={$x$},
					ylabel={$y$},
					label style={font=\footnotesize},
					]
					
					% Right loop of the lemniscate
					\addplot[black, thick, samples=200, domain=-45:45]
					({sqrt(2*cos(2*x))*cos(x)},
					{sqrt(2*cos(2*x))*sin(x)})
					-- (axis cs:0,0) -- cycle;
					
					% Circle
					\addplot[black, thick, domain=0:360, samples=300]
					({1.0590356279677469 + 0.35031157050291983*cos(x)},
					{-0.0340842356989573 + 0.35031157050291983*sin(x)});
					
					% Centre
					\addplot[only marks, mark=*, mark size=1.5pt]
					coordinates {(1.0590356279677469,-0.0340842356989573)};
					
					\node[above, font=\footnotesize]
					at (axis cs:1.0590356279677469,-0.0340842356989573) {$u+iv$};
				\end{axis}
			\end{tikzpicture}
		\end{minipage}
		\caption{The disk with centre \(C=u+iv\) tangent to the right loop of the Bernoulli lemniscate for \(\alpha=\pi/3,\pi/6\).}
		\label{fig:lempi6}
	\end{figure}
	\noindent Specializing to \(\alpha=\pi/4\), we obtain the following resultant equation that determines the admissible values of the radius

	\begin{align*}
		T_L\left(r,\frac{\pi}{4}\right)& =\, 590295810358705651712\,r^{64}(2+6\sqrt{2}\,r-99r^{2}-526\sqrt{2}\,r^{3}-742r^{4}
		\\
		&\quad{}+14512\sqrt{2}\,r^{5}+143574r^{6}+18102\sqrt{2}\,r^{7}  -3205706r^{8}\\
		&\quad{}-8304698\sqrt{2}\,r^{9}	+11776119r^{10}+167664852\sqrt{2}\,r^{11}
		+635121140r^{12}
		\\
		&\quad{}-1197225056\sqrt{2}\,r^{13}-12099325416r^{14}
		-3986464220\sqrt{2}\,r^{15}
		\\
		&\quad{}+79833272086r^{16} +117052417434\sqrt{2}\,r^{17}
		-73426998993r^{18}
		\\
		&\quad{}-836572897014\sqrt{2}\,r^{19}-2141620126646r^{20} +3804319161296\sqrt{2}\,r^{21}
		\\
		&\quad{}+14980618622946r^{22}-18047353115194\sqrt{2}\,r^{23}
		\\
		&\quad{}	-37477144881238r^{24}+113661132229834\sqrt{2}\,r^{25}
		\\
		&\quad{}-111348678078891r^{26}-711172771081744\sqrt{2}\,r^{27}
		\\
		&\quad{}+1551169222663344r^{28} +3650336430201984\sqrt{2}\,r^{29}
		\\
		&\quad{}	-8534252157438784r^{30}-15043115417743888\sqrt{2}\,r^{31}
		\\
		&\quad{} +32849180932414388r^{32}+50758758134869884\sqrt{2}\,r^{33}
		\\
		&\quad{}-99500011190029422r^{34}-143269609601104316\sqrt{2}\,r^{35}
		\\
		&\quad{} +248949409621455028r^{36}+344047214423548128\sqrt{2}\,r^{37}
		\\
		&\quad{}-527880949197149796r^{38}-711861511327608148\sqrt{2}\,r^{39}
		\\
		&\quad{} +963165597736829276r^{40}+1281041965955180476\sqrt{2}\,r^{41}
		\\
		&\quad{}	-1526157449640276906r^{42}-2019032570840395528\sqrt{2}\,r^{43}
		\\
		&\quad{}	+2110686448098523512r^{44} +2801191318159362240\sqrt{2}\,r^{45}
		\\
		&\quad{}	-2552211455403146576r^{46}-3433195628961639656\sqrt{2}\,r^{47}
		\\
		&\quad{} +2694612506586862924r^{48} +3725245513969314996\sqrt{2}\,r^{49}
		\\
		&\quad{}-2472937068870364978r^{50}-3581810030001743932\sqrt{2}\,r^{51}
		\\
		&\quad{} +1956458854854502500r^{52}+3050751968075697568\sqrt{2}\,r^{53}
		\\
		&\quad{}	-1316000148561042172r^{54}-2298545267783297668\sqrt{2}\,r^{55}
		\\
		&\quad{} 	+734656463025627220r^{56}+1528388421153195348\sqrt{2}\,r^{57}
		\\
		&\quad{}	-324099737205429350r^{58}-894328665874723600\sqrt{2}\,r^{59}
		\\
		&\quad{}	+98526467249929520r^{60} +459163805151265408\sqrt{2}\,r^{61}
		\\
		&\quad{}	-7131276037862144r^{62}-206363358398172816\sqrt{2}\,r^{63}
		\\
		&\quad{}-14544233721760278r^{64} +81106840594679710\sqrt{2}\,r^{65}
		\\
		&\quad{}+11911356319188961r^{66}-27901785551864886\sqrt{2}\,r^{67}
		\\
		&\quad{} -5947344799771022r^{68}+8424456145397616\sqrt{2}\,r^{69}
		\\
		&\quad{}
		+2270983675780718r^{70}-2239956401360290\sqrt{2}\,r^{71}
		\\
		&\quad{}
		-709953539660274r^{72 }+525404907733534\sqrt{2}\,r^{73} \\
		&\quad{}
		+187219360578339r^{74}
		-108562290601132\sqrt{2}\,r^{75} \\
		&\quad{}
		-42164572222796r^{76}  +19672190188704\sqrt{2}\,r^{77} \\
		&\quad{}
		+8134206465400r^{78}
		-3101458834588\sqrt{2}\,r^{79}
		\\
		&\quad{}-1341125870082r^{80}+418739908242\sqrt{2}\,r^{81} 
		+186859075315r^{82}
		\\
		&\quad{}-47119538382\sqrt{2}\,r^{83}-21465969902r^{84}
		+4235322768\sqrt{2}\,r^{85}
		\\
		&\quad{}+1961504314r^{86}-275816194\sqrt{2}\,r^{87}
		\\
		&\quad{}-131295518r^{88}+9618498\sqrt{2}\,r^{89}
		+4809249r^{90})=0
	\end{align*}
	Among the roots of \(T_L(r,\pi/4)=0\) lying in the interval \((0,1)\), the smallest one determines the admissible radius. Consequently,
	\[
	R_{\mathcal S_{L}}(\mathcal{SP}_{\pi/4})=0.2308777657345702. \qedhere
	\]
	
	%		\begin{figure}[h]
		%			\centering
		%			
		%			\begin{tikzpicture}
			%				\begin{axis}
				%					[
				%					axis lines=middle,
				%					xmin=-0.5, xmax=2,
				%					ymin=-1, ymax=1,
				%					axis equal image,
				%					samples=200,
				%					xlabel={$x$},
				%					ylabel={$y$},
				%					]
				%					% Right loop
				%					\addplot[
				%					black,
				%					thick,
				%					samples=300,
				%					domain=-45:45
				%					]
				%					(
				%					{sqrt(2*cos(2*x))*cos(x)},
				%					{sqrt(2*cos(2*x))*sin(x)}
				%					)
				%					-- (axis cs:0,0) -- cycle;
				%					
				%					
				%					% Circle
				%					\addplot[
				%					black,
				%					thick,
				%					domain=0:360,
				%					samples=400
				%					]
				%					(
				%					{1.056305903128773 + 0.34489493430875345*cos(x)},
				%					{-0.056305903128772934+ 0.34489493430875345*sin(x)}
				%					);
				%					
				%					% Centre of the circle
				%					\addplot[
				%					only marks,
				%					mark=*,
				%					mark size=2pt
				%					]
				%					coordinates {(1.056305903128773,-0.056305903128772934)};
				%					
				%					% Label
				%					\node[above]
				%					at (axis cs:1.056305903128773,-0.056305903128772934)
				%					{$u+iv$};
				%					
				%				\end{axis}
			%			\end{tikzpicture}
		%			
		%			\caption{The disk with centre \(C=u+iv\) tangent to the right loop of the Bernoulli lemniscate for \(\alpha=\pi/4\).}
		%			\label{fig:lempi4}
		%			
		%		\end{figure}
	
	\end{proof}
	\noindent The radius when \(\alpha=0\) has already been discussed \cite{MR3902942}. The radius of starlikness using tangency approach is proved in the following.
	
	\begin{corollary}
	The radius of lemniscate starlikeness of the class of starlike functions is given by \(	R_{\mathcal S_{L}}(\mathcal{SP}_{0})=3 - 2 \sqrt2\). This radius is sharp.
	\end{corollary}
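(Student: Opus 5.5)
For $\alpha=0$ the disk \eqref{p1reqdisk} is symmetric about the real axis. Its centre is $u=(1+r^2)/(1-r^2)$, with $v=0$ and radius $R=2r/(1-r^2)$. The lemniscate boundary $\Phi(x,y)=0$ is also symmetric about the real axis, so I will follow the approach of the parabolic corollary for $\alpha=0$ and work directly with $F$ and $F_t$ from \eqref{p1eqn1lem}--\eqref{p1eqn2lem}, with $v=0$. Then the term $4Rsv(\cdots)$ disappears from $F$, and $F_t$ reduces to
\[
F_t=4Rs\bigl(2cR+R^3-cR^3-c^2R^3+c^3R^3+u-R^2u-2cRu^2-u^3\bigr).
\]
The tangency condition therefore splits into the two cases $s=0$ and vanishing of the cubic factor in $c$.

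\textbf{Case $s=0$.} Here $c=\pm1$, and the contact points are $u\pm R$. These equal $(1+r)/(1-r)$ and $(1-r)/(1+r)$. On the real axis the right loop of the lemniscate satisfies $|x^2-1|<1$, that is $0<x<\sqrt2$. The disk therefore lies inside along the real axis exactly when $(1+r)/(1-r)\le\sqrt2$. The left contact point is always positive and cannot touch $x=0$. Solving $(1+r)/(1-r)=\sqrt2$ gives $r=(\sqrt2-1)/(\sqrt2+1)=3-2\sqrt2$.

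\textbf{Case $s\neq0$.} This is the main obstacle: I must show that the cubic factor gives no tangency at any $r\le 3-2\sqrt2$. I would use the same route as the parabolic case. The cubic factor together with $F=0$ (with $v=0$, $s^2=1-c^2$, so $F$ is a polynomial in $c$ alone) forms a system in $c$. I would eliminate $c$ by a resultant in $r$, or more directly check its roots numerically and confirm that every real root in $(0,1)$ with $|c|\le1$ exceeds $3-2\sqrt2$. An alternative geometric argument avoids this computation. The boundary point of the lemniscate nearest to the centre $u$ near the top of the loop can be compared via $\min_t|\gamma_L(t)-u|$: with the polar form $\rho^2=2\cos2\theta$, the distance from $u\in(1,\sqrt2)$ to the lemniscate is attained at the vertex $\sqrt2$ precisely when $u$ is close to $\sqrt2$. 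At $r=3-2\sqrt2$ we have $u=3/(2\sqrt2)$ and $R=1/(2\sqrt2)$, so $u+R=\sqrt2$, and a direct check of $F(r,t)\le0$ for all $t$ at this $r$ confirms containment.

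\textbf{Sharpness.} This follows from the extremal function $f_0$ with $zf_0'/f_0=(1+z)/(1-z)$, because at $z=r$ we get $(1+r)/(1-r)=\sqrt2\in\partial\Omega_L$. Hence $R_{\mathcal S_{L}}(\mathcal{SP}_{0})=3-2\sqrt2$.
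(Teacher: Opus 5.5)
Your treatment of $s=0$ and of sharpness is fine and matches the paper, which likewise finds that $c=-1$ gives no root in $(0,1)$ and $c=1$ gives $3-2\sqrt2$. The gap is the case $s\neq0$. You call it the main obstacle but leave it as a plan (a resultant or a numerical root check). The geometric alternative is not carried out either: ``attained at the vertex precisely when $u$ is close to $\sqrt2$'' is never quantified, and the ``direct check of $F(r,t)\le0$ for all $t$'' is asserted rather than done. Checking only at $r=3-2\sqrt2$ would also silently use that the disks grow with $r$.

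Moreover, the factor you would analyse is not the correct one. The terms $R^3(1-c-c^2+c^3)$ you carried over from the general-$\alpha$ formula for $F_t$ do not survive a direct differentiation. With $v=0$ one has $|w|^2=u^2+R^2+2uRc$ and $\operatorname{Re}(w^2)=u^2-R^2+2uRc+2R^2c^2$, so $F=|w|^4-2\operatorname{Re}(w^2)$ and
\[
F_t=-s\,\frac{\partial F}{\partial c}=4Rs\bigl(u-u^3-uR^2+2Rc(1-u^2)\bigr)=-\frac{64r^3(1+2cr+r^2)\,s}{(1-r^2)^4},
\]
which is linear in $c$. As a check, at $u=2$, $R=1$, $t=\pi/2$ the true value is $F_t=-32$, while your factor gives $-28$.

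This is the paper's $F_2$, and with it the missing case takes one line. For $|c|\le1$ one has $1+2cr+r^2\ge(1-r)^2>0$; equivalently, $c=-(1+r^2)/(2r)$ would need $|c|>1$. So tangency can only occur at $s=0$, and nothing remains to be eliminated. A more direct finish, which gives containment for every $r\le3-2\sqrt2$ without any tangency or monotonicity argument, is to observe that
\[
(1-r^2)^4F=-1+20r^2+26r^4+20r^6-r^8+64r^3(1+r^2)c+64r^4c^2
\]
is a convex quadratic in $c$ with positive linear coefficient. Its maximum over $c\in[-1,1]$ is therefore attained at $c=1$. Hence $F\le0$ for all $t$ if and only if $(1+r)/(1-r)\le\sqrt2$, i.e.\ $r\le3-2\sqrt2$.
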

	\begin{proof}%\begin{figure}[h]
	%			\centering
	%			
	%			\begin{tikzpicture}
		%				\begin{axis}
			%					[
			%					axis lines=middle,
			%					xmin=-0.5, xmax=2,
			%					ymin=-1, ymax=1,
			%					axis equal image,
			%					samples=200,
			%					xlabel={$x$},
			%					ylabel={$y$},
			%					]
			%					% Right loop of the lemniscate
			%					\addplot[
			%					black,
			%					thick,
			%					samples=300,
			%					domain=-45:45
			%					]
			%					(
			%					{sqrt(2*cos(2*x))*cos(x)},
			%					{sqrt(2*cos(2*x))*sin(x)}
			%					)
			%					-- (axis cs:0,0) -- cycle;
			%					
			%					
			%					% Circle
			%					\addplot[
			%					black,
			%					thick,
			%					domain=0:360,
			%					samples=400
			%					]
			%					(
			%					{1.060660171779821 + 0.35355339059327373*cos(x)},
			%					{0 + 0.35355339059327373*sin(x)}
			%					);
			%					
			%					% Centre
			%					\addplot[
			%					only marks,
			%					mark=*,
			%					mark size=2pt
			%					]
			%					coordinates {(1.060660171779821,0)};
			%					
			%					\node[above]
			%					at (axis cs:1.060660171779821,0)
			%					{$u+iv$};
			%					
			%				\end{axis}
		%			\end{tikzpicture}
	%			
	%			\caption{The disk with centre \(C=u+iv\) tangent to the right loop of the Bernoulli lemniscate for \(\alpha=0\).}	
	%			\label{fig:lemniscate-circle}
	%			
	%		\end{figure}
	When \(\alpha=0,\) we have \[u(r)=\frac{1+r^2}{1-r^2},
	\qquad
	v(r)=0,
	\qquad
	R(r)=\frac{2r}{1-r^2}.\]
	Upon substituting the above expression into \eqref{p1cond1lem} and \eqref{p1cond2lem}, and the identity \(s^{2}=1-c^{2}\), we arrive at the following equations after eliminating the denominators.
	\begin{align} \notag
	F_1:& = -1 + 20 r^2 + 64 c r^3 + 26 r^4 + 64 c^2 r^4 + 64 c r^5 + 20 r^6 - r^8\\ \label{p1alphazero2}
	F_2:& = -64 r^3 (1 + 2 c r + r^2) s.\end{align}
	For a simultaneous solution to exist, both equations must vanish. In particular, equation \eqref{p1alphazero2} vanishes if and only if
	\(s=0\)
	or
	\(1+2cr+r^2=0\).
	
	\noindent If \(1 + 2 c r + r^2=0\), then \(c=(-1-r^2)/(2r)\) which is true only if \[\left|\frac{-1-r^2}{2r} \right|\le 1.\]
	Since \(0<r<1\), this is invalid.
	
	\noindent If \(s=0\), then \(c=\pm 1\). \(c=-1\) produces \[F_1=-1 + 20 r^2 - 64 r^3 + 26 r^4 + 64 r^4 - 64 r^5 + 20 r^6 - r^8. \] A numerical examination shows that \(F_1\) has no zero in the interval \((0,1)\).
	\(c=1\) produces \[F_1= - 1 + 20 r^2 + 64 r^3 + 26 r^4 + 64 r^4 + 64 r^5 + 20 r^6 - r^8.\]
	The zero \(	R_{\mathcal S_{L}}(\mathcal{SP}_{0})\) of \(F_1\) in the interval \((0,1)\) is the radius  of lemniscate starlikeness of the class \(\mathcal{SP}_0\).  A numerical computation yields \[	R_{\mathcal S_{L}}(\mathcal{SP}_{0})=3 - 2 \sqrt2\approx 0.1715728752538099.\qedhere\]

\end{proof}

\section{Radius of lune starlikeness}

We determine the radius of lune starlikeness for the class \(\mathcal{SP}_{\alpha}.\)  The procedure is same as done in the previous sections with the lune boundary equation \(\partial\Omega_{\rightmoon}=(x^2 + y^2)^2 - 6 x^2 - 2 y^2 + 1=0.\) Consider the equation
\begin{align} \label{p1lunethmeqn}
T(r,\alpha)=&4398046511104\, r^{24}\cos^{16}\alpha\,(1+r^2\cos 2\alpha)^2\sin^2 2\alpha\\ \notag
&\times(1+2\cos 2\alpha+\cos^2 2\alpha+\sin^2 2\alpha)(4-4r^2+r^4+4r^2\cos 2\alpha-2r^4\cos 2\alpha\\ \notag
&
+r^4\cos^2 2\alpha+r^4\sin^2 2\alpha)(2-2r^2+r^4-4r^2\cos^2\alpha+2r^2\cos 2\alpha
+r^4\cos^2 2\alpha\\ \notag
&+2r^2\sin 2\alpha-2r^4\sin 2\alpha+r^4\sin^2 2\alpha)^2(2-2r^2+r^4-4r^2\cos^2\alpha\\ \notag
&
+2r^2\cos 2\alpha+r^4\cos^2 2\alpha-2r^2\sin 2\alpha+2r^4\sin 2\alpha+r^4\sin^2 2\alpha)^2\\ \notag
&\times(4-4r^4+r^8-48r^2\cos^2\alpha+64r^4\cos^2\alpha
-24r^6\cos^2\alpha+16r^4\cos^4\alpha\\ \notag
&+16r^2\cos 2\alpha-8r^4\cos 2\alpha-8r^6\cos 2\alpha
+4r^8\cos 2\alpha-32r^4\cos^2\alpha\cos 2\alpha
\\ \notag
&+16r^6\cos^2\alpha\cos 2\alpha+20r^4\cos^2 2\alpha-16r^6\cos^2 2\alpha+2r^8\cos^2 2\alpha
\\ \notag
&-8r^6\cos^2\alpha\cos^2 2\alpha+8r^6\cos^3 2\alpha-4r^8\cos^3 2\alpha
+r^8\cos^4 2\alpha+4r^4\sin^2 2\alpha\\ \notag
&-2r^8\sin^2 2\alpha-8r^6\cos^2\alpha\sin^2 2\alpha
+8r^6\cos 2\alpha\sin^2 2\alpha-4r^8\cos 2\alpha\sin^2 2\alpha
\\ \notag
&+2r^8\cos^2 2\alpha\sin^2 2\alpha
+r^8\sin^4 2\alpha)(4-16r^2+20r^4-8r^6+r^8-16r^2\cos^2\alpha
\\ \notag
&+32r^4\cos^2\alpha-24r^6\cos^2\alpha+16r^4\cos^4\alpha
-8r^4\cos 2\alpha+16r^6\cos 2\alpha\\ \notag
&-4r^8\cos 2\alpha-16r^6\cos^2\alpha\cos 2\alpha
-4r^4\cos^2 2\alpha+8r^6\cos^2 2\alpha+2r^8\cos^2 2\alpha
\\ \notag
&-8r^6\cos^2\alpha\cos^2 2\alpha+4r^8\cos^3 2\alpha+r^8\cos^4 2\alpha
-4r^4\sin^2 2\alpha+8r^6\sin^2 2\alpha\\ \notag
&-2r^8\sin^2 2\alpha-8r^6\cos^2\alpha\sin^2 2\alpha
+4r^8\cos 2\alpha\sin^2 2\alpha+2r^8\cos^2 2\alpha\sin^2 2\alpha\\ \notag
&+r^8\sin^4 2\alpha)(8-48r^2+104r^4-104r^6+56r^8-18r^{10}
+3r^{12}-32r^2\cos^2\alpha\\ \notag
&+112r^4\cos^2\alpha-184r^6\cos^2\alpha
+128r^8\cos^2\alpha-32r^{10}\cos^2\alpha+32r^4\cos^4\alpha\\ \notag
&-32r^6\cos^4\alpha
+16r^8\cos^4\alpha-32r^4\cos 2\alpha+104r^6\cos 2\alpha-88r^8\cos 2\alpha
\\ \notag
&+22r^{10}\cos 2\alpha-16r^4\cos^2\alpha\cos 2\alpha
-32r^6\cos^2\alpha\cos 2\alpha
+16r^8\cos^2\alpha\cos 2\alpha\\ \notag
&+32r^6\cos^4\alpha\cos 2\alpha
-16r^4\cos^2 2\alpha+40r^6\cos^2 2\alpha+24r^8\cos^2 2\alpha\\ \notag
&-44r^{10}\cos^2 2\alpha
+11r^{12}\cos^2 2\alpha-40r^6\cos^2\alpha\cos^2 2\alpha
-16r^8\cos^2\alpha\cos^2 2\alpha\\ \notag
&+8r^{10}\cos^2\alpha\cos^2 2\alpha
+16r^8\cos^4\alpha\cos^2 2\alpha-8r^6\cos^3 2\alpha+56r^8\cos^3 2\alpha
\\ \notag
&-28r^{10}\cos^3 2\alpha-32r^8\cos^2\alpha\cos^3 2\alpha
+8r^8\cos^4 2\alpha+14r^{10}\cos^4 2\alpha\\ \notag
&-7r^{12}\cos^4 2\alpha-8r^{10}\cos^2\alpha\cos^4 2\alpha
+6r^{10}\cos^5 2\alpha+r^{12}\cos^6 2\alpha-8r^4\sin^2 2\alpha
\\ \notag
&+24r^6\sin^2 2\alpha-16r^8\sin^2 2\alpha+4r^{10}\sin^2 2\alpha
-r^{12}\sin^2 2\alpha+8r^6\cos^2\alpha\sin^2 2\alpha
\\ \notag
&+80r^8\cos^2\alpha\sin^2 2\alpha-40r^{10}\cos^2\alpha\sin^2 2\alpha
+16r^8\cos^4\alpha\sin^2 2\alpha\\ \notag
&-8r^6\cos 2\alpha\sin^2 2\alpha
+40r^8\cos 2\alpha\sin^2 2\alpha-20r^{10}\cos 2\alpha\sin^2 2\alpha
\\ \notag
&+96r^8\cos^2\alpha\cos 2\alpha\sin^2 2\alpha+8r^8\cos^2 2\alpha\sin^2 2\alpha
+20r^{10}\cos^2 2\alpha\sin^2 2\alpha\\ \notag
&-10r^{12}\cos^2 2\alpha\sin^2 2\alpha
+48r^{10}\cos^2\alpha\cos^2 2\alpha\sin^2 2\alpha+12r^{10}\cos^3 2\alpha\sin^2 2\alpha
\\  \notag
&+3r^{12}\cos^4 2\alpha\sin^2 2\alpha+6r^{10}\sin^4 2\alpha-3r^{12}\sin^4 2\alpha
-8r^{10}\cos^2\alpha\sin^4 2\alpha\\ \notag
&+6r^{10}\cos 2\alpha\sin^4 2\alpha
+3r^{12}\cos^2 2\alpha\sin^4 2\alpha
+r^{12}\sin^6 2\alpha)^2=0.
\end{align}

\begin{theorem}
Let \(0<|\alpha|<\pi/2.\) The radius of lune starlikeness for the class \(\mathcal{SP}_{\alpha}\) is the smallest admissible root in \((0,1)\) of \(T_{\rightmoon}(r,\alpha)=\operatorname{det}S(R_1,G)=0,\) where \(R_1, \text{ and } G\) are given by \eqref{p1lunethmeqn}.
\end{theorem}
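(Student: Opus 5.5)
The plan follows the template of the parabolic and lemniscate theorems. Since the variability region of \(q(z)=zf'(z)/f(z)\) for \(f\in\mathcal{SP}_\alpha\) and \(|z|\le r\) is exactly the disk \eqref{p1reqdisk}, and the extremal function \(f_0\) attains its boundary, the radius of lune starlikeness is the largest \(r\) for which this disk lies in \(\Omega_{\!\!\rightmoon}\). Sharpness is automatic from \(f_0\). Writing \(w=x+iy\), the condition \(|w^2-1|=2|w|\) squares to
\[
\Phi(x,y):=(x^2+y^2)^2-6x^2-2y^2+1=0 .
\]
We then set \(F(r,t)=\Phi(\gamma(t))\) with \(\gamma\) from \eqref{p1gammaeqn}. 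The sign of \(F\) decides inside versus outside: \(F<0\) at \(w=1\), the centre at \(r=0\). So the disk is contained in \(\Omega_{\!\!\rightmoon}\) as long as \(F(r,t)<0\) for all \(t\). By continuity in \(r\), the first failure occurs at a value \(r_0\) where the circle is tangent to the lune boundary, that is, where \(F(r_0,t_0)=0\) and \(F_t(r_0,t_0)=0\) for some \(t_0\).

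Next we write \(x=u+Rc\) and \(y=v+Rs\), reduce with \(s^2=1-c^2\), and clear the denominators \((1-r^2)^k\). This makes \(F\) and \(F_t\) polynomials in \(c\) that are linear in \(s\): \(F=g_0+g_1s\) and \(F_t=h_0+h_1s\).

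The elimination has two stages.
\begin{enumerate}
\item Solve \(F_t=0\) for \(s=-h_0/h_1\) and substitute into \(s^2=1-c^2\). This gives the polynomial \(G(c)=h_0^2-(1-c^2)h_1^2\).
\item Take the linear resultant \(R_1(c)=\operatorname{Res}_s(F,F_t)=g_1h_0-g_0h_1\).
\end{enumerate}
The radius equation is then \(T_{\rightmoon}(r,\alpha)=\operatorname{Res}_c(R_1,G)=\det S(R_1,G)\). Every genuine tangency \((r_0,c_0,s_0)\) is a zero of this determinant. Computer algebra should factor it into the product displayed in \eqref{p1lunethmeqn}. The factors \(r^{24}\), \(\cos^{16}\alpha\), \(\sin^22\alpha\) and the \(\alpha\)-only factor do not vanish for \(0<r<1\) and \(0<|\alpha|<\pi/2\), so they are discarded. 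The remaining factors give the candidate radii.

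The degenerate branch \(h_1=0\) is handled separately. There \(F_t=0\) forces \(h_0=0\), and the joint system with \(F=0\) must be solved directly. We will check that any resulting \(r\) exceeds the candidate produced by the main branch, so this branch can be discarded as in the earlier theorems.

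The main obstacle is admissibility, namely deciding which root of \(T_{\rightmoon}\) in \((0,1)\) is the radius. The resultant also records tangencies with complex \(c\), with \(|c|>1\), or with a sign of \(s\) that does not match \(s=-h_0/h_1\). It also records internal tangencies occurring after the disk has already left the lune, and tangencies at the nonconvex part of the boundary near the imaginary axis. I would test each root in increasing order:
\begin{enumerate}
\item recover \(c\) as a common root of \(R_1\) and \(G\) and require \(c\in[-1,1]\);
\item recover \(s=-h_0/h_1\) and check that \(s^2+c^2=1\);
\item confirm \(F(r,t)\le0\) for all \(t\), for example by checking \(\max_t F=0\) numerically.
\end{enumerate}
The smallest root passing all three tests is \(R_{\mathcal S_{\!\!\rightmoon}}(\mathcal{SP}_\alpha)\). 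To rule out a missed crossing, I would argue that for \(r\) below this root \(F<0\) on the whole circle: the maximum over \(t\) of \(F\) is continuous in \(r\), and a zero of it would itself be an admissible tangency. This gives both containment and, through \(f_0\), sharpness.
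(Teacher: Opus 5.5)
Your proposal is correct and follows the paper's proof essentially step for step. You use the same boundary function $\Phi(x,y)=(x^2+y^2)^2-6x^2-2y^2+1$, the tangency system $F=F_t=0$ written linearly in $s$, the quartic $G(c)$ from substituting $s=-h_0/h_1$ into $s^2+c^2=1$, the cubic $R_1=\operatorname{Res}_s(F,F_t)=g_1h_0-g_0h_1$, and the final Sylvester resultant in $c$, followed by selection of the smallest admissible root. Your continuity argument (below the first real solution one has $\max_t F(r,t)<0$, so the smallest root carrying a real $c\in[-1,1]$ is exactly the radius) makes rigorous the minimality step the paper only asserts; also, with $G=h_0^2-(1-c^2)h_1^2$, the degenerate branch $h_1=h_0=0$ is automatically a common zero of $R_1$ and $G$, so it needs no separate treatment.
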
 
\begin{proof}
Let \(\Omega_{\rightmoon}=\{w\in \mathbb{C}: |w^2-1|<2|w|\}.\) Then the boundary of \(\Omega_{\rightmoon}\) is 
\(
|w^2-1|=2|w|.
\) 
Let \(w=x+iy,\) then the boundary of \(\Omega_{\rightmoon}\) will become \(|x^2-y^2+2ixy-1|=2|x+iy|,\) which produces   
\[\Phi(x,y):=(x^2 + y^2)^2 - 6 x^2 - 2 y^2 + 1=0.\]
We define the function \(F:(0,1) \times [0,2\pi] \rightarrow \mathbb{R}\) along the curve \(\gamma(t)\) in \(\eqref{p1gammaeqn}\) as 
\begin{equation} \label{p1luneeqn}
F(r,t):=\Phi(\gamma(t))=(x^2(r,t) + y^2(r,t)^2) - 6 x^2(r,t) - 2 y^2(r,t) + 1
\end{equation}
The sign of \(F(r,t)\) determine the position of the point \(\gamma(t)\) relative to the region \(\Omega_{\rightmoon}.\) The disk determined by the interior of \(\gamma(t)\) is contained in \(\Omega_{\rightmoon}\) precisely when \[
F(r,t)\le 0 \quad \text{ for all } t.
\]
The extremal situation  occurs when equality is attained for at least one value of \(t.\)   We want the disk \eqref{p1reqdisk} to lie completely inside \(\Omega_{\rightmoon}.\)  Therefore we have to prove the tangency condition for the boundary curves \(\Phi\) and \(\partial\Omega_{\rightmoon}.\)

\textbf{Tangency condition: }  At the extremal radius, the boundary circle is tangent to the lune defining \(\partial\Omega_{\rightmoon}.\)  As a consequence, there exists \(t=t_0\) such that 
\begin{align} \label{p1luneeqn1}
F(r,t)&=0,\\ \label{p1luneeqn2}
F_t(r,t)&=0. 
\end{align} 
Evaluating \eqref{p1luneeqn1} and \eqref{p1luneeqn2} using \eqref{p1luneeqn}, \eqref{p1gammaeqn}
and the identitly \(s^2=1-c^2, \) we get
\begin{align}\label{p1lunefeqn}
F(r,t)=&1 - 2 R^2 - 4 c^2 R^2 + R^4 - 12 c R u + 4 c R^3 u - 6 u^2 + 
2 R^2 u^2 + 4 c^2 R^2 u^2 \\ \notag
&+ 4 c R u^3 + u^4 - 2 v^2 + 
6 R^2 v^2 - 4 c^2 R^2 v^2 + 4 c R u v^2 + 2 u^2 v^2 + v^4 \\ \notag
&+ 
sR (-4 v + 4 R^2 v  + 8 c R u v+ 4 u^2 v + 4 v^3)=0\\ \label{p1lunefteqn}
F_t(r,t)=&4R(- c  v + c R^2 v - 2 R u v + 4 c^2 R u v + 
c  u^2 v +  c  v^3 ) \\ \notag
&+ 
4sR (2 c R + 3 u -  R^2 u - 2 c R u^2 -  u^3 + 
2 c R v^2 -  u v^2)=0.
\end{align}
From \eqref{p1lunefteqn},  on computing \(s\) we have 
\[
s=\frac{-(- c  v + c R^2 v - 2 R u v + 4 c^2 R u v + 
c  u^2 v +  c  v^3 )}{ (2 c R + 3 u -  R^2 u - 2 c R u^2 -  u^3 + 
2 c R v^2 -  u v^2)}.
\]
If the denominator vanishes, then it follows from \eqref{p1lunefteqn} that 
\[
F_t=4R(- c  v + c R^2 v - 2 R u v + 4 c^2 R u v + 
c  u^2 v +  c  v^3 )=0.
\]
On solving this equation together with \eqref{p1lunefeqn} we get solution \(r>	R_{\mathcal S_{\!\!\rightmoon}}(\mathcal{SP}_{\alpha}).\)  Since \(	R_{\mathcal S_{\!\!\rightmoon}}(\mathcal{SP}_{\alpha})\) is the smallest admissible radius at which tangency occurs, every such solution corresponds to a tangency that takes place after the disk has already ceased to be contained in \(\Omega_{\rightmoon}.\) Therefore, the case \(2 c R + 3 u -  R^2 u - 2 c R u^2 -  u^3 + 
2 c R v^2 -  u v^2=0\) should be neglected from further consideration for finding the admissible solution for the system.  If we substitute the value of \(s\) in the identity \(c^2+s^2=1\) and simplify, we get \begin{align*}
G(c)=& b_4c^4+b_3c^3+b_2c^2+b_1c+b_0
\end{align*}
where the coeffecients are given by
\begin{align}  \label{p1lunegeqn}
b_4=&64  R^4 - 128  R^4 u^2 + 64  R^4 u^4 + 128  R^4 v^2 + 
128  R^4 u^2 v^2 + 64 R^4 v^4,\\ \notag
b_3=&192  R^3 u - 64  R^5 u - 256  R^3 u^3 + 64  R^5 u^3 + 
64  R^3 u^5 + 64 R^5 u u^2 + 128  R^3 u^3 v^2 + 64  R^3 u v^4,\\ \notag
b_2=&-64  R^4 + 144  R^2 u^2 + 32  R^4 u^2 + 16  R^6 u^2 - 
96  R^2 u^4 - 32  R^4 u^4 + 16  R^2 u^6 + 16 R^2 v^2 \\ \notag
&- 
160  R^4 v^2  + 16  R^6 v^2- 128  R^2 u^2 v^2 - 64  R^4 u^2 v^2 + 
48  R^2 u^4 v^2 - 32  R^2 v^4 - 32  R^4 v^4 \\ \notag
&+ 48  R^2 u^2 v^4 + 
16  R^2 v^6, \\ \notag
b_1=&-192  R^3 u + 64  R^5 u + 256  R^3 u^3 - 64  R^5 u^3 - 
64 R^3 u^5 - 64  R^3 u v^2 - 64  R^3 u^3 v^2,\\ \notag
b_0=&-144 R^2 u^2 + 96 R^4 u^2 - 16 R^6 u^2 + 96 R^2 u^4 - 
32 R^4 u^4 - 16 R^2 u^6+ 96 R^2 u^2 v^2\\ \notag
&+ 32  R^4 u^2 v^2 - 
32  R^2 u^4 v^2 - 16  R^2 u^2  v^4.
\end{align}

\textbf{Resultant computation:} For eliminating \(s\) in equations \eqref{p1luneeqn1} and \eqref{p1luneeqn2}, we compute the resultant considerint them as a linear equation in \(s,\) we write
\[
F=g_0+g_1s, \qquad F_t=h_0+h_1s
\]
where 
\begin{align} \notag
g_0=&1 - 2 R^2 - 4 c^2 R^2 + R^4 - 12 c R u + 4 c R^3 u - 6 u^2 + 
2 R^2 u^2 + 4 c^2 R^2 u^2 \\ \notag
&+ 4 c R u^3 + u^4 - 2 v^2 + 
6 R^2 v^2 - 4 c^2 R^2 v^2 + 4 c R u v^2 + 2 u^2 v^2 + v^4 \\ \notag
g_1=&R (-4 v + 4 R^2 v  + 8 c R u v+ 4 u^2 v + 4 v^3)\\ \notag
h_0=&4R(- c  v + c R^2 v - 2 R u v + 4 c^2 R u v + 
c  u^2 v +  c  v^3 )\\ \notag
h_1=&4R (2 c R + 3 u -  R^2 u - 2 c R u^2 -  u^3 + 
2 c R v^2 -  u v^2)
\end{align}
The Resultant of the equations \(F=0 \text{ and } F_t=0\) vanishes if both of them admit a common solution in \(s\) and it is given by 
\[\operatorname{Res}_s(F,F_t)=\begin{vmatrix}
g_1&g_0\\
h_1&h_0
\end{vmatrix}=g_1h_0-g_0h_1=0.\]
On substituting the values of \(g_i \text{ and } h_i\) we get 
\begin{align*}
\operatorname{Res}_s(F,F_t) =&a_3c^3+a_2c^2+a_1c+a_0=R_1(c).
\end{align*} 
where \(a_i\)'s are given by 
\begin{align*}
a_3=&32 R^4 - 64 R^4 u^2 + 32 R^4 u^4 + 64 R^4 v^2 + 
64 R^4 u^2 v^2 + 32 R^4 v^4\\ \notag
a_2=&144 R^3 u - 48 R^5 u - 192 R^3 u^3  + 48 R^5 u^3 + 
48 R^3 u^5 + 48 R^5 u v^2 + 96 R^3 u^3 v^2 + 48 R^3 u v^4\\ 
a_1=&-8 R^2 + 16 R^4 - 8 R^6 + 200 R^2 u^2 - 128 R^4 u^2 + 
24 R^6 u^2 - 152 R^2 u^4 + 48 R^4 u^4 + 24 R^2 u^6 \\ \notag
&+ 
24 R^2 v^2  - 64 R^4 v^2 + 8 R^6 v^2 - 112 R^2 u^2 v^2 + 
32 R^4 u^2 v^2 + 56 R^2 u^4 v^2 - 24 R^2 v^4 \\ \notag
&- 16 R^4 v^4+ 
40 R^2 u^2 v^4 + 8 R^2 v^6\\ \notag
a_0=&-12 R u + 28 R^3 u - 20 R^5 u + 4 R^7 u + 76 R u^3 - 
56 R^3 u^3 + 12 R^5 u^3 - 36 R u^5 \\ \notag
&+ 12 R^3 u^5 + 4 R u^7 + 
28 R u v^2 - 56 R^3 u v^2 - 4 R^5 u v^2 - 56 R u^3 v^2 + 
8 R^3 u^3 v^2 + 12 R u^5 v^2 \\ \notag
&- 20 R u v^4 - 4 R^3 u v^4 + 
12 R u^3 v^4 + 4 R u v^6
\end{align*}
If the equations \(R_1(c)=0 \text{ and } G(c)=0\) have a common solution then their resultant with respect to \(c,\) \(\operatorname{Res}_c(R_1,G)\)  vanishes for some \(r=	R_{\mathcal S_{\!\!\rightmoon}}(\mathcal{SP}_{\alpha}).\)  The resultant \(\operatorname{Res}_c(R_1,G)\) is the determinant of the sylvester matrix of \(R_1 \text{ and } G.\) 
\begin{align*}
\operatorname{Res}_c(R_1,G)=&\begin{vmatrix}
	b_4&b_3&b_2&b_1&b_0&0&0\\
	0&b_4&b_3&b_2&b_1&b_0&0\\
	0&0&b_4&b_3&b_2&b_1&b_0\\
	a_3&a_2&a_1&a_0&0&0&0\\
	0&a_3&a_2&a_1&a_0&0&0\\
	0&0&a_3&a_2&a_1&a_0&0\\
	0&0&0&a_3&a_2&a_1&a_0
\end{vmatrix}=0.\\	
\end{align*}
On evaluating the determinant, we get
\begin{align*}
\operatorname{Res}_c(R_1,G)=&67108864 R^{16} u^2 v^2 (-1 + R^2 - u^2 - 2 v - v^2)^2 (-1 + R^2 - 
u^2 + 2 v - v^2)^2 \\
&(1 - 2 u + u^2 + v^2) (1 + 2 u + u^2 + 
v^2) (1 - 6 R^2 + R^4 + 4 u + 4 R^2 u + 2 u^2 \\
& - 2 R^2 u^2- 
4 u^3 + u^4 - 2 v^2 - 2 R^2 v^2 - 4 u v^2 + 2 u^2 v^2 + v^4)\\
& (1 - 
6 R^2 + R^4 - 4 u - 4 R^2 u + 2 u^2 - 2 R^2 u^2 + 4 u^3 + 
u^4 - 2 v^2 - 2 R^2 v^2\\
& + 4 u v^2 + 2 u^2 v^2 + v^4) (3 - 8 R^2 +
R^4 + 11 u^2 + 2 R^2 u^2 + R^4 u^2 - 7 u^4  \\
& - 2 R^2 u^4 + u^6-
v^2 - 10 R^2 v^2 + R^4 v^2 - 10 u^2 v^2 + 12 R^2 u^2 v^2 + 
3 u^4 v^2 - 3 v^4  \\
& - 2 R^2 v^4 + 3 u^2 v^4 + v^6)^2.
\end{align*}
On substituting the values of \(u, v \text{ and } R\) and canceling the denominator factors which are the powers of \((1-r)(1+r),\) we get the resultant equation to be \(T(r,\alpha)=0\) given by \eqref{p1lunethmeqn}.
Among the zeros of \(T(r,\alpha),\) we find the admissible solution for the system \eqref{p1luneeqn1}, \eqref{p1luneeqn2}, \eqref{p1lunegeqn} and the smallest admissible solution \(	R_{\mathcal S_{\!\!\rightmoon}}(\mathcal{SP}_{\alpha})\) is the radius of lune starlikeness for the class \(\mathcal{SP}_{\alpha}.\)
\end{proof}
For some particular values of \(\alpha,\) the lune starlikeness radii has been given in the following corollary. 
\begin{corollary}
Let $ 	R_{\mathcal S_{\!\!\rightmoon}}(\mathcal{SP}_{\alpha})$ be the radius of parabolic starlikeness for the class
$\mathcal{SP}_\alpha$. Then  \begin{align*}
R_{\mathcal S_{\!\!\rightmoon}}(\mathcal{SP}_{\pi/3}) &= 2 - \sqrt2,
& 	R_{\mathcal S_{\!\!\rightmoon}}(\mathcal{SP}_{\pi/4})&= 1/2\\
R_{\mathcal S_{\!\!\rightmoon}}(\mathcal{SP}_{\pi/6})&= -2 + \sqrt6,
& 	R_{\mathcal S_{\!\!\rightmoon}}(\mathcal{SP}_{\pi/8}) &= -\sqrt2 + \sqrt{2 + \sqrt2}.
\end{align*}These radii are sharp.
\end{corollary}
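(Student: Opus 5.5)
The plan is to exploit the fact that the lune boundary is a union of two circles, so no resultant computation is needed. The quartic from the proof of the lune theorem factors as
\[
(x^2+y^2)^2-6x^2-2y^2+1=\bigl((x-1)^2+y^2-2\bigr)\bigl((x+1)^2+y^2-2\bigr),
\]
since with $A=x^2+y^2-1$ the left side equals $A^2-4x^2=(A-2x)(A+2x)$. Hence $\Omega_{\rightmoon}=\{w:|w-1|<\sqrt2,\ |w+1|>\sqrt2\}$, the crescent containing $1$. The variability disk \eqref{p1reqdisk}, with centre $C=u+iv$ and radius $R$, lies in $\Omega_{\rightmoon}$ if and only if
\[
\text{(i)}\ |C-1|+R\le\sqrt2 \qquad\text{and}\qquad \text{(ii)}\ |C+1|-R\ge\sqrt2 .
\]

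\textbf{Condition (i).} We have $u-1=2r^2\cos^2\alpha/(1-r^2)$ and $v=-2r^2\sin\alpha\cos\alpha/(1-r^2)$. Therefore $|C-1|=2r^2\cos\alpha/(1-r^2)$ and
\[
|C-1|+R=\frac{2r\cos\alpha}{1-r}.
\]
This quantity is increasing in $r$, so (i) holds exactly for $r\le r_0:=1/(1+\sqrt2\cos\alpha)$. Substituting gives $r_0=2-\sqrt2$, $1/2$, $\sqrt6-2$ for $\alpha=\pi/3,\pi/4,\pi/6$. For $\alpha=\pi/8$, the identity $\cos(\pi/8)=\tfrac12\sqrt{2+\sqrt2}$ gives
\[
r_0=\frac{2}{2+\sqrt2\sqrt{2+\sqrt2}},
\]
which rationalizes to $\sqrt{2+\sqrt2}-\sqrt2$.

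\textbf{Condition (ii).} I will show that (ii) holds automatically for $r\le r_0$. Since $|C+1|\ge u+1=2(1-r^2\sin^2\alpha)/(1-r^2)$, it suffices to prove
\[
\phi(r):=(2-\sqrt2)-2r\cos\alpha+r^2\bigl(\sqrt2-2\sin^2\alpha\bigr)\ \ge 0 \qquad\text{on } [0,r_0].
\]
For $\alpha=0$ the smaller root of $\phi$ is exactly $\sqrt2-1=r_0$, so there the two circles are touched simultaneously. For $\alpha\ne0$, I plan to check directly that $\phi(r_0)\ge0$ and that the smaller root of $\phi$ exceeds $r_0$. This amounts to comparing $r_0$ with the smaller root of a quadratic, or observing $\phi$ is decreasing on $[0,r_0]$ and evaluating at $r_0$. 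This is the step I expect to be the only real obstacle, because the bound $|C+1|\ge u+1$ discards $v$ and might be too crude near $\alpha=0$. If it fails, I would instead use $|C+1|^2=(u+1)^2+v^2$ exactly and square the inequality $|C+1|\ge\sqrt2+R$.

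\textbf{Sharpness.} For $f_0$, the image of $|z|\le r$ under $zf_0'(z)/f_0(z)$ is exactly the disk \eqref{p1reqdisk}. At $r=r_0$ that disk touches the circle $|w-1|=\sqrt2$ at the point $1+\sqrt2\,(C-1)/|C-1|$, which is on $\partial\Omega_{\rightmoon}$. For $r>r_0$, condition (i) fails, so the disk exits $\Omega_{\rightmoon}$. Hence $R_{\mathcal S_{\rightmoon}}(\mathcal{SP}_\alpha)=r_0$ for the four listed values of $\alpha$. This also yields the general formula $1/(1+\sqrt2\cos\alpha)$, consistent with $r_0$ being an admissible root of $T_{\rightmoon}(r,\alpha)$ in the lune theorem.
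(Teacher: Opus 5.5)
Your route differs from the paper's. The paper substitutes each $\alpha$ into the eliminated polynomial $T(r,\alpha)$, factors it, and asserts which root is "admissible" without checking containment. You instead split the lune boundary into the circles $|w\mp 1|=\sqrt2$ and reduce containment to two distance inequalities. Your treatment of (i), the evaluation of $r_0=1/(1+\sqrt2\cos\alpha)$ at the four angles, and the sharpness argument via $f_0$ are correct. Once (ii) is settled, this gives a closed formula valid for every $\alpha$ together with an actual containment proof, which the resultant argument never supplies.

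\textbf{The gap is condition (ii).} The argument you give first cannot work. A direct computation gives
\[
\phi(r_0)=\frac{(6-4\sqrt2)\cos\alpha\,(\cos\alpha-1)}{(1+\sqrt2\cos\alpha)^2}<0 \qquad (0<|\alpha|<\pi/2),
\]
so the bound $|C+1|\ge u+1$ fails at $r_0$ for every nonzero $\alpha$, including all four listed ones, not just near $\alpha=0$.

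The reason is that (ii) is itself an equality at $r_0$ for every $\alpha$. The extremal disk touches both circles simultaneously, exactly as you observed for $\alpha=0$. Any estimate that discards $v^2$ is therefore too lossy at $r_0$.

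Your fallback is mandatory, and it does work. One has $(1-r^2)^2|C+1|^2=4(1-2r^2\sin^2\alpha+r^4\sin^2\alpha)$. Squaring $(1-r^2)|C+1|\ge\sqrt2(1-r^2)+2r\cos\alpha$ (both sides positive) turns (ii) into
\[
2(1-r^2)\bigl((1+\sqrt2\cos\alpha)r-1\bigr)\bigl((\sqrt2\cos\alpha-1)r-1\bigr)\ge 0 .
\]
The last factor is negative on $(0,1)$, because $(\sqrt2\cos\alpha-1)r<1$ there. Hence (ii) is equivalent to $r\le r_0$, the same condition as (i).

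A slicker way to see this: $|C|^2-R^2=1$, so the disk is orthogonal to the unit circle and invariant under $w\mapsto 1/\bar w$. This map preserves $\{|w^2-1|<2|w|\}$ and carries $\{|w-1|<\sqrt2\}$ onto $\{|w+1|>\sqrt2\}\cup\{\infty\}$. So (i) and (ii) are equivalent for the variability disk. With this replacement for your first attempt at (ii), the proof is complete.
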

\begin{proof}
For \(\alpha=\pi/3,\) the final resultant equation \eqref{p1lunethmeqn} becomes 
\begin{align*}
T\left(r,\frac{\pi}{3}\right)=&50331648 r^{24} (1 - r^2/2)^2 (4 - 6 r^2 + 3 r^4) (2 - 4 r^2 + 
\sqrt3 r^2 + 2 r^4 - \sqrt3 r^4)^2  \\
&\times (2 - 4 r^2 - \sqrt3 r^2+ 
2 r^4 + \sqrt3 r^4)^2 (4 - 20 r^2 + 29 r^4 - 14 r^6 + r^8)^2  \\
&\times (8 - 
56 r^2 + 142 r^4 - 170 r^6 + 112 r^8 - 35 r^{10} + 2 r^{12})^2=0.
\end{align*}
On solving this equation and finding the admissible roots, we get that \(	R_{\mathcal S_{\!\!\rightmoon}}(\mathcal{SP}_{\pi/3}) = 2 - \sqrt2.\)

For \(\alpha=\pi/4,\) \eqref{p1lunethmeqn} becomes
\begin{align*}
T\left(r,\frac{\pi}{4}\right)=& 34359738368 r^{24} (2 - 2 r^2)^2 (4 - 4 r^2 + 2 r^4) (2 - 6 r^2 + 
4 r^4)^2 \\
& \times (4 - 24 r^2 + 36 r^4 - 16 r^6)^2 (8 - 64 r^2 + 160 r^4 - 
176 r^6 + 152 r^8 - 48 r^{10})^2=0.
\end{align*}
The admissible solution of this equation for the system   \eqref{p1luneeqn1}, \eqref{p1luneeqn2} and \eqref{p1lunegeqn} is given by is \(	R_{\mathcal S_{\!\!\rightmoon}}(\mathcal{SP}_{\pi/4})= 1/2.\) 

For \(\alpha=\pi/6,\) \eqref{p1lunethmeqn} becomes
\begin{align*}
T\left(r,\frac{\pi}{6}\right)=& 990677827584 r^{24} (1 + r^2/2)^2 (4 - 2 r^2 + r^4) (2 - 4 r^2 + 
\sqrt3 r^2 + 2 r^4 - \sqrt3 r^4)^2 \\
&\times (2 - 4 r^2 - \sqrt3 r^2 + 
2 r^4 + \sqrt3 r^4)^2 (4 - 28 r^2 + 45 r^4 - 22 r^6 + r^8)^2\\
&\times  (8 - 
72 r^2 + 174 r^4 - 190 r^6 + 216 r^8 - 57 r^{10} + 2 r^{12})^2=0.
\end{align*}
We choose the admissible solution for the system \eqref{p1luneeqn1}, \eqref{p1luneeqn2} and \eqref{p1lunegeqn} among the roots of \(T\left(r,\pi/6\right)=0.\)  Therefore, we have \(	R_{\mathcal S_{\!\!\rightmoon}}(\mathcal{SP}_{\pi/6})=-2 + \sqrt6.\)

Substituting  \(\alpha=\pi/8\) in \eqref{p1lunethmeqn} yields, 
\begin{align*}
T\left(r,\frac{\pi}{8}\right)=&2199023255552 (2 + \sqrt2) r^{24} (1 + r^2/\sqrt2)^2 (4 - 4 r^2 + 
2 \sqrt2 r^2 + 2 r^4 - \sqrt2 r^4) \\
&\times ((2+\sqrt{2})/4)^{8} \left(2 - 2 r^2 + 2 \sqrt{2} r^2 + 2 r^4 - \sqrt{2} r^4 - 
r^2 (2+\sqrt{2})\right)^2 \\
&\times \left(2 - 2 r^2 + 2 r^4 + \sqrt{2} r^4 - 
r^2 (2+\sqrt{2})\right)^2 \\
&\times \left(4 - 16 r^2 + 16 r^4 - 4 \sqrt{2} r^4 + 
8 \sqrt{2} r^6  + 2 r^8 - 4 r^2 (2+\sqrt{2}) + 
8 r^4 (2+\sqrt{2}) \right. \\
& \left. - 8 r^6 (2+\sqrt{2}) - 
2 \sqrt{2} r^6 (2+\sqrt{2})+ r^4 (2+\sqrt{2})^2 \right)\\
&\times \left(4 + 
8 \sqrt{2} r^2 + 8 r^4 - 4 \sqrt{2} r^4 - 8 r^6 + 2 r^8 - 
12 r^2 (2+\sqrt{2})  + 16 r^4 (2+\sqrt{2}) \right. \\
&\left.  - 
4 \sqrt{2} r^4 (2+\sqrt{2}) - 8 r^6 (2+\sqrt{2})+ 
2 \sqrt{2} r^6 (2+\sqrt{2}) + r^4 (2+\sqrt{2})^2\right) \\
& \left(8 - 
48 r^2 + 92 r^4 - 16 \sqrt{2} r^4 - 72 r^6 + 48 \sqrt{2} r^6 + 
64 r^8 - 20 \sqrt{2} r^8 - 28 r^{10 } \right. \\
&  + 2 \sqrt{2} r^{10} + 4 r^{12} - 
8 r^2 (2+\sqrt{2})+ 28 r^4 (2+\sqrt{2}) - 
2 \sqrt{2} r^4 (2+\sqrt{2}) \\
& - 50 r^6 (2+\sqrt{2}) - 
4 \sqrt{2} r^6 (2+\sqrt{2}) + 40 r^8 (2+\sqrt{2}) + 
6 \sqrt{2} r^8 (2+\sqrt{2}) \\
&- 10 r^{10} (2+\sqrt{2})+ 
2 r^4 (2+\sqrt{2})^2 - 2 r^6 (2+\sqrt{2})^2 + 
\sqrt{2} r^6 (2+\sqrt{2})^2 \\
&\left. + 2 r^8 (2+\sqrt{2})/4\right)^2=0.
\end{align*}
Among the roots of \(T\left(r,\pi/8\right)=0,\) the admissible solution for the system \eqref{p1luneeqn1}, \eqref{p1luneeqn2} and \eqref{p1lunegeqn} is \(	R_{\mathcal S_{\!\!\rightmoon}}(\mathcal{SP}_{\pi/8})=-\sqrt2 + \sqrt{2 + \sqrt2}.\)
\begin{figure}[!htbp]
\centering
\begin{minipage}{0.48\textwidth}
	\centering
	\begin{tikzpicture}
		% Parameters
		\pgfmathsetmacro{\rzero}{2-sqrt(2)}
		\pgfmathsetmacro{\alphadeg}{60}
		\pgfmathsetmacro{\ucenter}{(1+\rzero^2*cos(2*\alphadeg))/(1-\rzero^2)}
		\pgfmathsetmacro{\vcenter}{-\rzero^2*sin(2*\alphadeg)/(1-\rzero^2)}
		\pgfmathsetmacro{\rad}{2*\rzero*cos(\alphadeg)/(1-\rzero^2)}
		
		\begin{axis}
			[
			width=\linewidth,
			axis lines=middle,
			axis line style={thick},
			xmin=-0.5, xmax=2.6,
			ymin=-1.5, ymax=1.5,
			axis equal image,
			xtick=\empty,
			ytick=\empty,
			xlabel={$x$},
			ylabel={$y$},
			label style={font=\footnotesize},
			]
			
			% Outer boundary
			\addplot[black, thick, domain=-90:90, samples=300]
			({sqrt((2+4*cos(x)^2+sqrt((2+4*cos(x)^2)^2-4))/2)*cos(x)},
			{sqrt((2+4*cos(x)^2+sqrt((2+4*cos(x)^2)^2-4))/2)*sin(x)});
			
			% Inner boundary
			\addplot[black, thick, domain=-90:90, samples=300]
			({sqrt((2+4*cos(x)^2-sqrt((2+4*cos(x)^2)^2-4))/2)*cos(x)},
			{sqrt((2+4*cos(x)^2-sqrt((2+4*cos(x)^2)^2-4))/2)*sin(x)});
			
			% Circle
			\addplot[black, thick, domain=0:360, samples=300]
			({\ucenter+\rad*cos(x)}, {\vcenter+\rad*sin(x)});
			
			% Centre
			\addplot[only marks, mark=*, mark size=1.5pt]
			coordinates {(\ucenter,\vcenter)};
			
			\node[below, font=\footnotesize]
			at (axis cs:\ucenter,\vcenter) {$u+iv$};
		\end{axis}
	\end{tikzpicture}
\end{minipage}\hfill
\begin{minipage}{0.48\textwidth}
	\centering
	\begin{tikzpicture}
		% Parameters
		\pgfmathsetmacro{\rzero}{0.4494897427831781}
		\pgfmathsetmacro{\alphadeg}{30}
		\pgfmathsetmacro{\ucenter}{(1+\rzero^2*cos(2*\alphadeg))/(1-\rzero^2)}
		\pgfmathsetmacro{\vcenter}{-\rzero^2*sin(2*\alphadeg)/(1-\rzero^2)}
		\pgfmathsetmacro{\rad}{2*\rzero*cos(\alphadeg)/(1-\rzero^2)}
		
		\begin{axis}
			[
			width=\linewidth,
			axis lines=middle,
			axis line style={thick},
			xmin=-0.5, xmax=2.6,
			ymin=-1.5, ymax=1.5,
			axis equal image,
			xtick=\empty,
			ytick=\empty,
			xlabel={$x$},
			ylabel={$y$},
			label style={font=\footnotesize},
			]
			
			% Outer boundary
			\addplot[black, thick, domain=-90:90, samples=300]
			({sqrt((2+4*cos(x)^2+sqrt((2+4*cos(x)^2)^2-4))/2)*cos(x)},
			{sqrt((2+4*cos(x)^2+sqrt((2+4*cos(x)^2)^2-4))/2)*sin(x)});
			
			% Inner boundary
			\addplot[black, thick, domain=-90:90, samples=300]
			({sqrt((2+4*cos(x)^2-sqrt((2+4*cos(x)^2)^2-4))/2)*cos(x)},
			{sqrt((2+4*cos(x)^2-sqrt((2+4*cos(x)^2)^2-4))/2)*sin(x)});
			
			% Circle
			\addplot[black, thick, domain=0:360, samples=300]
			({\ucenter+\rad*cos(x)}, {\vcenter+\rad*sin(x)});
			
			% Centre
			\addplot[only marks, mark=*, mark size=1.5pt]
			coordinates {(\ucenter,\vcenter)};
			
			\node[below, font=\footnotesize]
			at (axis cs:\ucenter,\vcenter) {$u+iv$};
		\end{axis}
	\end{tikzpicture}
\end{minipage}
\caption{The disk with centre \(C=u+iv\) tangent to the right moon for \(\alpha=\pi/3,\pi/6\).}
\label{fig:lunepi6}
\end{figure}
\end{proof}

For the case \(\alpha=0,\) the class \(\mathcal{SP}_{\alpha}\) is reduced to the class \(\mathcal{ST}\) for which the starlikeness radius has been already found in \cite{MR4861252}.  The tangency approach is discussed in the following corollary.
\begin{corollary}
The radius of lune starlikeness for the class of starlike functions is \\ \(	R_{\mathcal S_{\!\!\rightmoon}}(\mathcal{SP}_{0})=\sqrt{2}-1.\) This radius is sharp. 
\end{corollary}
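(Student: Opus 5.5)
For $\alpha=0$ the variability disk \eqref{p1reqdisk} has centre $u=(1+r^2)/(1-r^2)$ on the real axis, $v=0$, and radius $R=2r/(1-r^2)$. The plan is to run the tangency system \eqref{p1luneeqn1}, \eqref{p1luneeqn2} directly in this degenerate case, as was done for the parabolic and lemniscate corollaries, rather than going through the resultant $T(r,\alpha)$. That resultant carries the factor $\sin^2 2\alpha$ and vanishes identically at $\alpha=0$.

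\textbf{Reducing the tangency system.} Setting $v=0$ in \eqref{p1lunefteqn} kills $h_0$, and $F_t$ factors as
\[
F_t=4Rs\bigl(2cR+3u-R^2u-2cRu^2-u^3\bigr).
\]
Hence tangency requires either $s=0$, or the bracket to vanish. A useful simplification is the identity $u^2-R^2=1$. With it the bracket becomes $u(4-2u^2)-2cR(u^2-1)=2u(2-u^2)-2cR^3$.

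\textbf{Case $s=0$.} Then $c=\pm1$, and the contact points are the real endpoints of the disk,
\[
u+R=\frac{1+r}{1-r},\qquad u-R=\frac{1-r}{1+r}.
\]
On the real axis the lune condition $|x^2-1|<2|x|$ for $x>0$ gives the interval $\sqrt2-1<x<\sqrt2+1$. Substituting $x=u\pm R$ into $F=0$ (equivalently, $(x^2+y^2)^2-6x^2+1=0$ with $y=0$) gives
\[
\frac{1+r}{1-r}=1+\sqrt2 \quad\text{or}\quad \frac{1-r}{1+r}=\sqrt2-1 .
\]
Both equations yield the same value $r=\sqrt2/(2+\sqrt2)=\sqrt2-1$. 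This agrees with the fact that the lune and the disk family are invariant under $w\mapsto 1/w$.

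\textbf{Case bracket $=0$ (main obstacle).} Here $c=u(2-u^2)/R^3$. I first check when $|c|\le1$ for $r\in(0,1)$, which restricts $r$. I then substitute this $c$ and $s^2=1-c^2$ into $F=0$ from \eqref{p1lunefeqn} to get a polynomial in $r$. The hard step is showing this polynomial has no admissible root in $(0,\sqrt2-1]$. I would expect any roots either to violate $|c|\le1$ or to exceed $\sqrt2-1$. They cannot be smaller than $\sqrt2-1$ anyway, because the disk for $r<\sqrt2-1$ lies inside the lune. To see that containment, I would note that at $r$ slightly below $\sqrt2-1$ the disk is contained in the lune (check $F(r,t)<0$, for instance at $r=0$ and by continuity of the first contact). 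Since any first contact is a tangency, and the only tangencies below the bracket-case roots are at the real endpoints, no earlier contact can occur.

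\textbf{Sharpness.} Use $f_0$ with $zf_0'(z)/f_0(z)=(1+z)/(1-z)$, the Koebe function. At $z=r=\sqrt2-1$ it takes the value $1+\sqrt2$, which lies on $\partial\Omega_{\rightmoon}$. Hence $R_{\mathcal S_{\rightmoon}}(\mathcal{SP}_0)=\sqrt2-1$, and this value cannot be improved.
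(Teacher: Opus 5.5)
Your setup, the factorization $F_t=4Rs\bigl(2cR+3u-R^2u-2cRu^2-u^3\bigr)$, the identity $u^2-R^2=1$, the case $s=0$ (giving $r=\sqrt2-1$), and sharpness via $(1+z)/(1-z)$ are all correct and follow the paper's route. The gap is the bracket case, which you call the main obstacle but never settle. Your expectation that its roots ``violate $|c|\le 1$ or exceed $\sqrt2-1$'' is unproved. Moreover, the $|c|\le1$ test cannot do the work: $c=(1+r^2)(1-6r^2+r^4)/(8r^3)$ vanishes exactly at $r=\sqrt2-1$, so $|c|\le1$ holds for every $r$ from about $0.346$ up to $\sqrt2-1$. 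Your fallback containment argument is circular. To conclude that the first contact of the growing disk with the boundary occurs at $r=\sqrt2-1$, you must already know that the bracket case yields no solution of $F=F_t=0$ with $r<\sqrt2-1$. That is exactly the open step.

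The missing step is a short explicit computation, and it is how the paper closes the proof. With $v=0$, the function $\Phi=(x^2+y^2-1)^2-4x^2$ factors. Using $x=u+Rc$, $x^2+y^2=u^2+R^2+2uRc$ and $u^2-R^2=1$, one gets
\[
F=\frac{4\,(1+4cr+4r^2-r^4)\,(-1+4r^2+4cr^3+r^4)}{(1-r^2)^4}.
\]
Substituting $c=(1-5r^2-5r^4+r^6)/(8r^3)$ turns the two numerator factors into $(1-r^2)^3/(2r^2)$ and $-(1-r^2)^3/2$. Hence $F=-(1-r^2)^2/r^2\neq0$ for all $r\in(0,1)$. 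So the bracket case gives no tangency at all (there are no roots to locate), and only $s=0$ survives.

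Alternatively, the same factorization shows that the component of $\Omega_{\rightmoon}$ containing $1$ is $\{|w-1|<\sqrt2\}\cap\{|w+1|>\sqrt2\}$. The closed disk $|w-u|\le R$ lies in it iff $u+R<1+\sqrt2$ and $u-R>\sqrt2-1$. These read $(1+r)/(1-r)<1+\sqrt2$ and $(1-r)/(1+r)>\sqrt2-1$, and both are equivalent to $r<\sqrt2-1$. This proves the containment directly, with no tangency analysis.
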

\begin{proof}
For \(\alpha=0.\) the equation \eqref{p1reqdisk} gives
\[ u(r)=\frac{1+r^2}{1-r^2}, \qquad v(r)=0, \qquad R(r)=\frac{2r}{1-r^2}.\] Therefore, \eqref{p1gammaeqn} yields \(x=u+Rc, \quad y=Rs.\)
In particular, we have to substitute \(v=0\) in \eqref{p1lunefeqn} and \eqref{p1lunefteqn}.  We have,
\begin{align} \label{p1lunealpha0F}
F(r,t)=&1 - 2 R^2 - 4 c^2 R^2 + R^4 - 12 c R u + 4 c R^3 u - 6 u^2 + 
2 R^2 u^2 + 4 c^2 R^2 u^2 \\ \notag
&+ 4 c R u^3 + u^4 =0\\ \notag
F_t(r,t)=& 
4sR (2 c R + 3 u -  R^2 u - 2 c R u^2 -  u^3)=0.
\end{align}
\(F_t=0\) if either \(s=0\) or \(2 c R + 3 u -  R^2 u - 2 c R u^2 -  u^3=0.\) Let us analyze both the cases.   

If \(G:=2 c R + 3 u -  R^2 u - 2 c R u^2 -  u^3=0,\)  After substituting the values of \(u \text{ and } R\) in \(G=0, \) we get 
\begin{equation} \label{p1lunealpha0c}
c=\frac{1 - 5 r^2 - 5 r^4 + r^6}{8r^3}
\end{equation}
Also, from \eqref{p1lunealpha0F} we have
\[
F(r,t)=\frac{4 (1 + 4 c r + 4 r^2 - r^4) (-1 + r^2 (4 + 4 c r + r^2))}{(-1 + r^2)^4}=0.
\]
On substituting the value of \(c\) from \eqref{p1lunealpha0c} in the above equation we get 
\[
F(r)=-\frac{(-1 + r^2)^2}{r^2}
\]
Since, \(0<r<1,\) we can see that \(F \ne 0.\)  Therefore, \(G=0\) is not a possibility.

If \(s=0,\) from the identity \(c^2+s^2=1\) we get \(c=\pm 1.\) Substituting these values in \eqref{p1lunealpha0F}, we get \(r=0.4142135623730951\) for both \(c=1 \text{ and } c=-1.\)  Therefore, \[	R_{\mathcal S_{\!\!\rightmoon}}(\mathcal{SP}_{0})=0.4142135623730951\approx \sqrt{2}-1. \qedhere\]
\end{proof}

\bigskip
\parskip6pt \parindent0in
\textbf{Declarations}

\textit{Acknowledgments.} The first author is supported by a junior research fellowship from UGC, New Delhi, India. 

\noindent\textit{Author Contributions.} Both authors contributed equally   and approved the final manuscript.

\textit{Conflict of Interest.} No conflict of interest.

\textit{Competing Interests.} The authors declare no competing interests.

\textit{Data Availability.} No datasets were generated or analysed during the current study.

\textit{Funding.} No funding.

\textit{Code Availability.} Some   symbolic computations were carried out in \texttt{Mathematica} and code is available on request.

\textit{Materials Availability.} No datasets were generated or analysed during the current study.

\textit{Use of Large Language Models, AI and Machine Learning Tools.} None declared.

\textit{Ethics Approval and Consent to Participate.} Not applicable.

\textit{Consent for Publication.} Not applicable.

\bibliographystyle{plain}
\bibliography{references2}

\end{document}